\newcommand{\bigo}{\mathcal{O}}
\newcommand{\target}{v^*}
\newcommand{\dist}[2]{d_G(#1,#2)}
\newcommand{\st}{\hspace{0.1cm}\bigl|\bigr.\hspace{0.1cm}}
\newcommand{\eqn}[1]{\textup{eq}(#1)}
\newcommand{\pqn}[1]{\textup{pq}(#1)}
\newcommand{\Cl}{\mathcal{C}}
\newcommand{\N}{\mathbb{N}}
\newcommand{\Nat}{\mathbb{N}}
\newcommand{\SetCover}{\textsc{$3$-EXACT SET COVER}}
\newcommand{\NP}{\mathcal{NP}}
\newcommand{\G}{{\mathcal G}}
\newcommand{\Gnp}{\G(n,p)}
\newcommand{\R}{\mathbb{R}}
\newcommand{\E}{\mathbb E}
\newcommand{\Prob}{\mathbb{P}}
\newcommand{\eps}{\varepsilon}
\newcommand{\om}{\hat{\omega}}
\theoremstyle{plain}
\newtheorem{definition}{Definition}[section]
\newtheorem{lemma}[definition]{Lemma}
\newtheorem{theorem}[definition]{Theorem}
\newtheorem{observation}[definition]{Observation}
\newtheorem{corollary}[definition]{Corollary}
\title{Edge and Pair Queries---Random Graphs and Complexity}
\author{
Dariusz Dereniowski\thanks{Faculty of Electronics, Telecommunications and Informatics, Gda\'{n}sk University of Technology, Poland, email: \texttt{deren@eti.pg.edu.pl}. Author partially supported by National Science Centre, Poland, grant number 2018/31/B/ST6/00820.}
\and
Przemysław Gordinowicz\thanks{Institute of Mathematics, Lodz University of Technology Lódź, Poland, email: \texttt{pgordin@p.lodz.pl}}
\and
Paweł Prałat\thanks{Department of Mathematics, Toronto Metropolitan University, Toronto, ON, Canada, email: \texttt{pralat@torontomu.ca}}
}
\begin{document}

\maketitle

\begin{abstract}
We investigate two types of query games played on a graph, pair queries and edge queries. We concentrate on investigating the two associated graph parameters for binomial random graphs, and showing that determining any of the two parameters is $\NP$-hard for bounded degree graphs.
\end{abstract}

\section{Introduction}

Consider a query game played on a graph whose goal is to locate a vertex $\target$ that is unknown to an adaptive query algorithm.
Each query points to a pair of vertices $u$ and $v$, and the reply provides an answer that indicates which of those vertices is closer to $\target$, breaking ties arbitrarily.
The aim is to construct an algorithm performing as few queries as possible in the worst case.
In this work we consider two types of queries: \emph{edge queries} in which $u$ and $v$ need to be adjacent and \emph{pair queries} in which there is no restriction on the choice of $u$ and $v$.
The latter have not been considered in the literature before while the former have been extensively studied but only for trees.
We note that both models generalize the classical binary search on a path to arbitrary graphs.

\medskip

The game is formally defined in Subsection~\ref{sec:problem}. Our results are summarized in Subsection~\ref{sec:results} and previously known results are highlighted in Subsection~\ref{sec:related_work}. We provide three types of results. First, we reformulate the problem of finding an invisible target in the language of combinatorial game theory in which two perfect players play the game having complete information (Subsection~\ref{sec:reformulation}). We also show some universal bounds and provide a construction that shows the two graph parameters related to edge and, respectively, pair queries can be drastically different (Subsection~\ref{sec:basic_properties}). Then, we investigate the behaviour of the two parameters in binomial random graphs $\Gnp$ (Section~\ref{sec:random_graphs}). For a wide range of the parameter $p=p(n)$, the two graph parameters are predictable with high probability and turn out to be of the same order. Finally, we show that determining any of the two parameters is $\NP$-hard for bounded degree graphs (Section~\ref{sec:hardness}).

\subsection{Problem Statement}\label{sec:problem}

For an arbitrary simple graph $G$, we consider two types of \emph{query games}, which differ by the types of allowed queries.
We first define the dynamics of the game and then we define the queries.
The game is played between an (adaptive) \emph{algorithm} and an \emph{adversary}.
The adversary picks in advance a vertex $\target$ of $G$, that we call the \emph{target}.
The target is unknown to the algorithm and its goal is to locate $\target$ by asking as few queries as possible.
In each step of the game the algorithm performs a query by selecting two vertices $u$ and $v$ as outlined below depending on the type of the query.
Then, the adversary reveals which of those vertices is closer to $\target$, providing any of them when their distance to the target is the same.
The \emph{distance} $d_G(u,v)$ between any two vertices $u$ and $v$ is measured as the length of any shortest path between $u$ and $v$ in $G$.
The vertex pointed by the adversary is called the \emph{reply} to the query.
Note that the algorithm is adaptive in the sense that it is making its next query based on the graph and the replies to the previous queries.

In this work we consider two types of graph queries for the game played on $G$.
The more general one, called a \emph{pair query}, selects two arbitrary vertices $u$ and $v$ of $G$.
The second one, called an \emph{edge query}, selects two adjacent vertices $u$ and $v$.
Let $\pqn{G}$ be the least integer $t$ such that the algorithm can locate the target in at most $t$ pair queries, regardless of the strategy of the adversary. The graph parameter $\eqn{G}$ is defined similarly with the only difference that the algorithm has to perform edge queries instead. Since the latter version puts more restriction on the algorithm, for any graph $G$ we have $\pqn{G} \le \eqn{G}$.

The above definition of the process might suggest that this is an incomplete information game in which the target vertex is selected by one of the players (the adversary) but is hidden from the other player (the algorithm). However, because we concentrate on the worst-case scenario (that is, we search for the least number of queries that guarantees success, regardless where the target is and regardless of the strategy of the adversary), we can alternatively think about this game as a combinatorial game in which both players have perfect information. In particular, it implies that the graph parameters $\pqn{G}$ and $\eqn{G}$ are well-defined. We reformulate the process in this language in Subsection~\ref{sec:reformulation}.

\subsection{Notation}

In this paper, we present results obtained for the \emph{binomial random graph} $\Gnp$. More precisely, $\Gnp$ is a distribution over the class of graphs with vertex set $[n]:=\{1,\ldots,n\}$ in which every pair $\{i,j\} \in \binom{[n]}{2}$ appears independently as an edge in $G$ with probability~$p$. Note that $p=p(n)$ may (and usually does) tend to zero as $n$ tends to infinity. We say that $\Gnp$ has some property \emph{asymptotically almost surely} or a.a.s.\ if the probability that $\Gnp$ has this property tends to $1$ as $n$ goes to infinity. For more about this model see, for example,~\cite{Bollobas,JLR,Karonski_Frieze}.

Given two functions $f=f(n)$ and $g=g(n)$, we will write $f(n)=\bigo(g(n))$ if there exists an absolute constant $c \in \R_+$ such that $|f(n)| \leq c|g(n)|$ for all $n$, $f(n)=\Omega(g(n))$ if $g(n)=\bigo(f(n))$, $f(n)=\Theta(g(n))$ if $f(n)=\bigo(g(n))$ and $f(n)=\Omega(g(n))$, and we write $f(n)=o(g(n))$ or $f(n) \ll g(n)$ if $\lim_{n\to\infty} f(n)/g(n)=0$. In addition, we write $f(n) \gg g(n)$ if $g(n)=o(f(n))$ and we write $f(n) \sim g(n)$ if $f(n)=(1+o(1))g(n)$, that is, $\lim_{n\to\infty} f(n)/g(n)=1$.

We will use $\log n$ to denote a natural logarithm of $n$. For a given $n \in \N := \{1, 2, \ldots \}$, we will use $[n]$ to denote the set consisting of the first $n$ natural numbers, that is, $[n] := \{1, 2, \ldots, n\}$. Finally, as typical in the field of random graphs, for expressions that clearly have to be an integer, we round up or down but do not specify which: the choice of which does not affect the argument.

\subsection{Our Results}\label{sec:results}

Let us start with presenting results for binomial random graphs. The behaviour of $\pqn{G}$ and $\eqn{G}$ change drastically when the random graph changes its diameter. In order to control when this happens, we will use the following well-known result. 

\begin{lemma}[\cite{Bollobas}, Corollary 10.12]\label{lem:diameter}
Suppose that $d = pn \gg \log n$ and for some $i = i(n) \in \N$,
$$
d^{i+1}/n - 2 \log n \to \infty \text{ \ \ \ \ and \ \ \ \ } d^{i}/n - 2 \log n \to -\infty.
$$
Then the diameter of $\Gnp$ is equal to $i+1$ a.a.s.
\end{lemma}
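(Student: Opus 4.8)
The plan is to prove matching upper and lower bounds on the diameter, each holding a.a.s., by analysing the growth of breadth-first-search (BFS) neighborhoods. For a vertex $u$ and integer $k \ge 0$, write $N_k(u)$ for the set of vertices at distance exactly $k$ from $u$ and $B_k(u) = \bigcup_{j \le k} N_j(u)$ for the ball of radius $k$. The heart of the argument is a neighborhood-expansion estimate: as long as a ball is still far from exhausting $[n]$, the next layer is larger by a factor close to $d = pn$. Concretely, I would show that a.a.s., simultaneously for every vertex $u$ and every $k$ with $d^k = o(n)$, one has $|N_k(u)| = (1+o(1)) d^k$ (equivalently $|B_k(u)| = (1+o(1))d^k$, since the outer layer dominates when $d \gg 1$). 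This follows by exposing edges one layer at a time: conditioned on $B_{k-1}(u)$, each of the remaining $n - |B_{k-1}(u)|$ vertices joins $N_k(u)$ independently with probability $1 - (1-p)^{|N_{k-1}(u)|} = (1+o(1)) p\,|N_{k-1}(u)|$, and a Chernoff bound together with the assumption $d \gg \log n$ gives concentration with failure probability small enough to survive a union bound over all $u$ and all relevant $k$.

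For the upper bound, I would fix a pair of vertices $u, v$ and split $i = a + b$ so that $d^a = o(n)$ and $d^b = o(n)$ (this is feasible because $d^{i-1} = o(n)$, which follows from $d^i/n < 2\log n$ together with $d \gg \log n$; one may take $b \in \{0,1\}$). Growing $a$ layers from $u$ and $b$ layers from $v$ on disjoint vertex sets, the expansion estimate gives $|N_a(u)| \cdot |N_b(v)| = (1+o(1)) d^{a+b} = (1+o(1)) d^i$. The two vertices are then at distance at most $a + b + 1 = i+1$ provided there is at least one edge between $N_a(u)$ and $N_b(v)$; the probability that no such edge exists is $(1-p)^{|N_a(u)|\,|N_b(v)|} = \exp\!\big(-(1+o(1))\,p\,d^i\big) = \exp\!\big(-(1+o(1))\,d^{i+1}/n\big)$. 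The hypothesis $d^{i+1}/n - 2\log n \to \infty$ makes this $o(n^{-2})$, so a union bound over all $\binom{n}{2}$ pairs shows that a.a.s.\ every pair is within distance $i+1$, i.e.\ the diameter is at most $i+1$.

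For the lower bound, I would show a.a.s.\ there exists a pair of vertices at distance greater than $i$ (which, together with the upper bound, forces the diameter to equal $i+1$) via the first- and second-moment method applied to $X$, the number of ordered pairs $(u,v)$ with $\dist{u}{v} > i$. For a fixed pair, $v \notin B_i(u)$ requires $v$ to be nonadjacent to $N_{i-1}(u)$; since $|N_{i-1}(u)| = (1+o(1)) d^{i-1}$, this has probability $(1+o(1))\exp(-p\,d^{i-1}) = (1+o(1))\exp(-d^i/n)$, whence $\E X = (1+o(1))\, n^2 \exp(-d^i/n)$. Writing $d^i/n = 2\log n - g(n)$ with $g(n) \to \infty$ (this is exactly the hypothesis $d^i/n - 2\log n \to -\infty$) yields $\E X = (1+o(1))\, e^{g(n)} \to \infty$. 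A second-moment computation, controlling the covariance between the events for different pairs via the near-independence of well-separated BFS explorations, then gives $X > 0$ a.a.s.

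The main obstacle I anticipate is making the neighborhood-expansion estimate uniform and tight enough in the regime where balls approach size $n$: the multiplicative $(1+o(1))$ error must be controlled carefully because it is raised to powers of order $d^{i-1}$ in the collision probabilities, so a relative error that is $o(1)$ but not $o(1/\log n)$ can still shift the exponent $d^i/n$ by more than the slack $g(n)$. The two explorations from $u$ and from $v$ must also be decoupled cleanly, by revealing edges in a fixed order and tracking the small number of back-edges that create collisions within or between the trees. Equally delicate is the second-moment bound, since $\E X$ may tend to infinity only slowly; one must verify that $\E X^2 = (1+o(1))(\E X)^2$, which again reduces to showing that the BFS neighborhoods of two typical pairs behave almost independently.
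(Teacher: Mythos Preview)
The paper does not actually contain a proof of this lemma: it is quoted verbatim from Bollob\'as's book (\cite{Bollobas}, Corollary~10.12), and the paper only remarks that the result was first shown under the stronger hypothesis $d \gg \log^3 n$ in~\cite{bollobas1981diameter} before being relaxed to $d \gg \log n$. So there is no ``paper's own proof'' to compare against.

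That said, your sketch is precisely the classical argument underlying the cited result: layer-by-layer BFS expansion controlled by Chernoff bounds, a sprinkling/collision argument for the upper bound (grow balls from $u$ and $v$ until their product of sizes times $p$ is large, then look for a connecting edge), and a second-moment computation for the lower bound on the number of far-apart pairs. The obstacles you flag are exactly the real ones. In particular, you are right that the multiplicative error in $|N_{i-1}(u)|$ must be $o(1/\log n)$ rather than merely $o(1)$, since it gets multiplied by $d$ and compared against the slack $g(n)$ in $d^i/n = 2\log n - g(n)$; this is where the hypothesis $d \gg \log n$ (as opposed to $d \gg \log^3 n$ in the original paper) does real work, and handling it carefully is the content of the improvement in~\cite{Bollobas}. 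Your decoupling of the two BFS explorations and the second-moment bound would need to be fleshed out, but the outline is sound and matches the literature.
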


This result was proved in~\cite[Theorem~6]{bollobas1981diameter} for graphs with $d \gg \log^3 n$ but in~\cite[Corollary~10.12]{Bollobas} the condition was relaxed and it is now required only that $d \gg \log n$. 

\medskip

In order to state the results in a simple form, let us concentrate on relatively dense graphs (that is, $d > n^{\eps}$ for some $\eps > 0$) and values of $d$ for which we are relatively far away from the places when the diameter changes its value. 

\begin{corollary} \label{cor:xi}
Suppose that $d = pn = n^{\xi + o(1)}$, where $\xi \in ( \frac {1}{i+1}, \frac {1}{i} )$ for some $i \in \N$ ($i$ is an arbitrarily large but fixed constant).
Then, the following properties hold a.a.s.\ for $G \in \Gnp$.
$$
\pqn{G} = \Theta \Big( \eqn{G} \Big) = \Theta \left( \frac {n \log n}{d^i} \right). 
$$
\end{corollary}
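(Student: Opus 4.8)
The plan is to prove the two outer bounds separately and let the inequality $\pqn{G} \le \eqn{G}$ close the sandwich: a lower bound $\pqn{G} = \Omega(n \log n / d^i)$ together with an upper bound $\eqn{G} = \bigo(n \log n / d^i)$ forces $\pqn{G} = \Theta(\eqn{G}) = \Theta(n\log n/d^i)$. I would first dispense with the structural preliminaries. Writing $d = n^{\xi + o(1)}$ with $\tfrac{1}{i+1} < \xi < \tfrac{1}{i}$, we have $(i+1)\xi > 1$ and $i\xi < 1$, so $d^{i+1}/n = n^{(i+1)\xi - 1 + o(1)} \to \infty$ while $d^i/n = n^{i\xi - 1 + o(1)} \to 0$, both polynomially fast; each therefore dominates the $2\log n$ correction, and Lemma~\ref{lem:diameter} yields that the diameter of $G$ equals $i+1$ a.a.s. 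The same estimates give the ball-size facts I will use repeatedly: a.a.s., for every vertex $v$ the ball $B_i(v)$ of radius $i$ has $|B_i(v)| = \Theta(d^i) = o(n)$, the smaller ball satisfies $|B_{i-1}(v)| = \Theta(d^{i-1}) = o(d^i)$, and consequently all but $o(n)$ vertices lie at distance exactly $i+1$ from $v$.

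For the lower bound I would run the standard adversary (candidate-elimination) argument, but push it to a multiplicative decay, since an additive count only yields the weaker $\Omega(n/d^i)$. The adversary keeps a set $S$ of targets consistent with all replies and answers each query $\{u,v\}$ so as to retain the larger side, assigning every tie to the retained side; thus a single query deletes at most $\min(|S_u|,|S_v|)$ candidates, where $S_u = \{w \in S : d_G(u,w) < d_G(v,w)\}$ and symmetrically $S_v$. The crucial observation is that a candidate $w$ can be resolved only when $d_G(u,w) \ne d_G(v,w)$, which, as $d_G(v,w) \le i+1$, forces $w \in B_i(u) \cup B_i(v)$; hence $\min(|S_u|,|S_v|) \le |(B_i(u)\cup B_i(v)) \cap S|$. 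As long as $S$ remains spread out this is $\bigo(|S|\, d^i/n)$, so each query multiplies $|S|$ by at least $1 - \bigo(d^i/n)$; starting from $|S| = n$, reaching $|S| = 1$ then needs $\Omega\big(\tfrac{n}{d^i}\log n\big)$ queries, which is exactly where the extra $\log n$ enters.

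For the upper bound I would exhibit an explicit edge-query strategy of cost $\bigo(n\log n / d^i)$; since it uses only edge queries this bounds $\eqn{G}$, and $\pqn{G} \le \eqn{G}$ transfers it. The guiding heuristic is that a query is informative only when one endpoint lies within distance $i$ of $\target$, an event of ``probability'' $\Theta(d^i/n)$, so one expects to need a factor $n/d^i$ beyond the information-theoretic $\log n$. Concretely, I would keep the algorithm's own candidate set and, at each step, use the expansion of $\Gnp$ to locate an edge $\{u,v\}$ for which both $S_u$ and $S_v$ have size $\Omega(|S|\,d^i/n)$; whichever way the adversary replies, such a query removes an $\Omega(d^i/n)$-fraction of $S$, matching the lower-bound rate and driving $|S|$ from $n$ down to $\bigo(d^i)$ in $\bigo(\tfrac{n}{d^i}\log n)$ queries. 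Once $S$ is confined to a single ball of radius $i$ (size $\bigo(d^i) = o(n)$), a final binary-search-style phase finishes in $\bigo(\log n)$ additional queries, which is negligible.

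The main obstacle in both directions is the same: upgrading the trivial additive estimate to the multiplicative rate $(1 \pm \Theta(d^i/n))$ that produces the $\log n$ factor. On the lower-bound side this means guaranteeing that the adversary can keep $S$ simultaneously large and sufficiently spread across balls so that no ball of radius $i$ captures more than an $\bigo(d^i/n)$-fraction of it; I expect to maintain this with a potential/weighting argument and to treat the terminal, concentrated phase (where $|S| = \bigo(d^i)$) separately, at a cost of only $\bigo(\log n)$ queries. On the upper-bound side the delicate point is showing that the required balanced edge query always exists, for which I would invoke the a.a.s.\ expansion and neighbourhood-growth properties of $\Gnp$ recorded in the preliminary step.
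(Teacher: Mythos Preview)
Your high-level sandwich is right, and the preliminary diameter computation matches the paper's exactly. In the paper the corollary itself is dispatched in one line by checking that the hypotheses of three separate theorems (a lower bound on $\pqn{G}$ and upper bounds on $\pqn{G}$ and $\eqn{G}$) are met; what you are really sketching is a direct proof of those underlying theorems, so the comparison should be at that level.

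Your lower-bound argument has a genuine gap, and the paper avoids it by a quite different idea. You want the adversary to maintain a candidate set $S$ with the multiplicative invariant $|(N_{\le i}(u)\cup N_{\le i}(v))\cap S| = \bigo(|S|\,d^i/n)$, but the adversary does not choose $u,v$; nothing stops the algorithm from first using queries to drive $S$ into a single radius-$i$ ball and then halving it per query thereafter. Your proposed ``potential/weighting'' patch is not obviously workable, and the additive bound you yourself flag indeed yields only $\Omega(n/d^i)$. The paper sidesteps all of this: it proves (via an expansion lemma) that a.a.s.\ \emph{every} vertex set $Z$ of size $(1-o(1))\,\tfrac{n}{d^i}\log(d^i)$ still leaves at least two vertices outside $N_{\le i}(Z)$. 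Taking $Z$ to be the set of all $\le 2k$ queried endpoints after $k$ rounds, those two survivors are at distance exactly $i+1$ from every queried vertex, hence lie in $V_t^=$ at every step and are never separated, \emph{regardless} of the adversary's replies. The extra $\log n$ thus comes from a coupon-collector-type covering estimate, not from multiplicative decay of $|S|$.

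For the upper bound your greedy ``find a balanced edge'' plan is plausible but the paper again takes a different, technically lighter route: a randomized strategy organised into $\Theta(\log n)$ independent phases of $\Theta(n/d^i)$ rounds each, where within a phase one keeps the adversary's last reply and pairs it with a fresh random vertex (for edge queries, a random neighbour). Rather than tracking a shrinking $S$, the analysis fixes an arbitrary pair $x,y$ and shows that a single phase fails to separate them with probability at most $2e^{-b}+o(1)$; $\Theta(\log n)$ independent phases then succeed for all $\binom{n}{2}$ pairs simultaneously by a union bound, and the probabilistic method extracts a deterministic strategy. Your endgame claim that once $|S|=\bigo(d^i)$ a ``binary-search-style'' phase finishes in $\bigo(\log n)$ edge queries is unsupported---there is no obvious median structure inside an arbitrary radius-$i$ ball of $\Gnp$---and the paper needs no such endgame at all.
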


Note that for any $\xi \in ( \frac {1}{i+1}, \frac {1}{i} )$ there exists $\eps = \eps(\xi) > 0$ such that $d^{i}/n = n^{i \xi - 1 + o(1)} \le n^{-\eps} = o(1) \ll \log n$ whereas $d^{i+1}/n = n^{(i+1) \xi - 1 + o(1)} \ge n^{\eps} \gg \log n$. So, by Lemma~\ref{lem:diameter}, the diameter of $\Gnp$ is equal to $i+1$ a.a.s. More importantly, it implies that the assumptions stated in Theorems~\ref{thm:lower_bound}, \ref{thm:upper_bound_pqn}, and~\ref{thm:upper_bound_eqn} are satisfied. The first of them yields the desired lower bound for $\pqn{G}$ whereas the last two yield upper bounds for $\pqn{G}$ and, respectively, $\eqn{G}$. The corollary implies that a.a.s.\ both $\pqn{G}$ and $\eqn{G}$ are equal to $n^{f(\xi)+o(1)}$, where the function $f(\xi)$ is depicted in Figure~\ref{fig:f}.
\begin{figure}[htb]
	\begin{center}
		\includegraphics[scale=1]{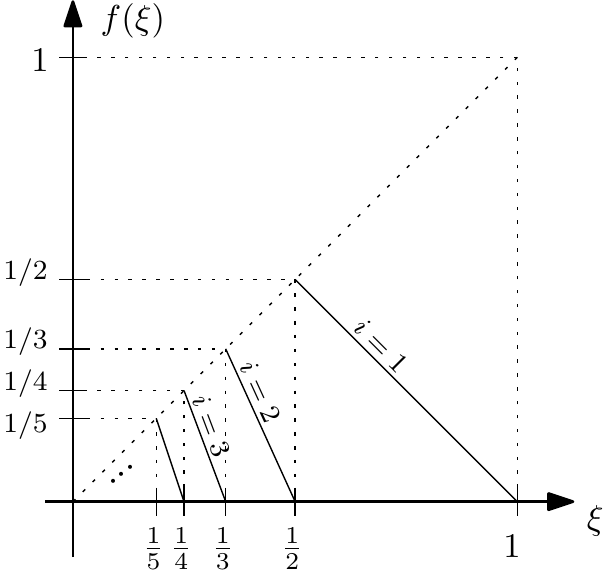}
	\end{center}
	\caption{An illustration for Corollary~\ref{cor:xi}: the function $f(\xi)$ }
	\label{fig:f}
\end{figure}

The above corollary ignores the cases when $\xi = 1/i$ for some constant $i \in \N$ and very sparse graphs (that is, when $d = n^{o(1)}$). The three theorems mentioned above provide some useful bounds for these regimes but not everything is known. For example, to keep the argument relatively short, we did not consider the cases when $i = i(n)$ is the largest natural number such that $d^i = o(n)$ but also $d^{i+1}/n - 2\log n \to -\infty$ (say, $d^{i+1} = cn$ for some constant $c >0$). Having said that, we provided a short argument dealing with a rather simple case when $p \in (0,1)$ is a fixed constant. The next corollary follows immediately from Theorem~\ref{thm:dense_graphs} which states an explicit upper bound and Observation~\ref{obs:lower} which provides a trivial lower bound but of matching order.

\begin{corollary}
Fix $p \in (0,1)$. 
The following property holds a.a.s.\ for $G = (V,E) \in \Gnp$:
$$
\pqn{G} = \Theta \Big( \eqn{G} \Big) = \Theta \left( \log n \right). 
$$
\end{corollary}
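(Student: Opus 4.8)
The plan is to sandwich the common value between a lower bound of $\Omega(\log n)$ and an upper bound of $\bigo(\log n)$, exploiting the trivial inequality $\pqn{G} \le \eqn{G}$ to transfer each bound to both parameters. Concretely, I would prove $\pqn{G} = \Omega(\log n)$ (which then also lower-bounds $\eqn{G}$) and $\eqn{G} = \bigo(\log n)$ a.a.s.\ (which then also upper-bounds $\pqn{G}$). Together these give $\pqn{G} = \Theta(\eqn{G}) = \Theta(\log n)$ a.a.s., which is the assertion. The first ingredient is the content I expect from Observation~\ref{obs:lower} and the second from Theorem~\ref{thm:dense_graphs}.

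For the lower bound I would run the standard adversary argument, which is deterministic and uses no property of $\Gnp$. Maintain the set $S$ of targets consistent with the replies so far, initialized to $S = V$. For any pair query $\{u,v\}$, a vertex $x$ is consistent with the reply $u$ whenever $\dist{u}{x} \le \dist{v}{x}$ and with the reply $v$ whenever $\dist{v}{x} \le \dist{u}{x}$, so the adversary can always answer so that at least $\lceil |S|/2 \rceil$ candidates survive (assigning the tie vertices to the larger side). After $t$ queries at least $n/2^{t}$ candidates remain, and locating a unique target forces $1 \ge n/2^{t}$, i.e.\ $t \ge \log_2 n$. Hence $\pqn{G} \ge \log_2 n = \Omega(\log n)$ for every $n$-vertex graph.

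For the upper bound I would first recall that for fixed $p \in (0,1)$ the graph $G \in \Gnp$ a.a.s.\ has diameter $2$: every non-adjacent pair fails to have a common neighbour only with probability $\binom{n}{2}(1-p)(1-p^2)^{n-2} = o(1)$, while $p<1$ forces some non-adjacent pair. In a diameter-$2$ graph an edge query $\{u,v\}$ (so $u \sim v$) splits the current candidate set cleanly: a candidate adjacent to exactly one of $u,v$ is strictly closer to that endpoint, whereas candidates adjacent to both or to neither are ties. Writing $A_u = S \cap (N(u)\setminus(N(v)\cup\{v\}))$ and $A_v = S \cap (N(v)\setminus(N(u)\cup\{u\}))$, the reply $u$ eliminates $A_v$ and the reply $v$ eliminates $A_u$, so the adversary can keep at most $|S| - \min(|A_u|,|A_v|)$ candidates. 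Since a fixed vertex is adjacent to a given candidate with probability $p$, a typical edge has $|A_u| \approx |A_v| \approx p(1-p)|S|$, giving a per-query shrink factor of $1-p(1-p) < 1$; iterating for $\bigo(\log n)$ steps (the base of the logarithm depends only on $p$) then identifies $\target$, so $\eqn{G} = \bigo(\log n)$.

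The step I expect to be the main obstacle is guaranteeing this constant-factor reduction \emph{uniformly over every candidate set the game can reach and across all scales of $|S|$}, rather than for one fixed $S$. For a fixed $S$ the counts $|A_u|,|A_v|$ concentrate around $p(1-p)|S|$ by Chernoff bounds, but a union bound over the $\binom{n}{|S|}$ possible sets is defeated by the $e^{|S|\log n}$ growth of the number of subsets once $|S|$ is small, and plain expander-mixing/quasirandomness arguments control $|N(u)\cap S|$ simultaneously for all $u$ only while $|S|$ is not too small (roughly $|S| \gg \sqrt n$). Thus the delicate part is to keep a balancing edge available as $S$ shrinks through the intermediate and small regimes, where a one-candidate-per-query endgame would already be too expensive. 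I would bridge this either by strengthening the concentration using the near-regularity and codegree structure of $\Gnp$ for constant $p$ so that balanced edges persist for all reachable $S$ down to constant size, or by switching to a tailored small-$S$ strategy that still removes a constant fraction. This uniformity-across-scales — not the per-query calculation or the lower bound — is the crux, and is what I expect the explicit bound in Theorem~\ref{thm:dense_graphs} to supply in the fixed-$p$ regime.
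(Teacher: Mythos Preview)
Your overall structure matches the paper exactly: the corollary is immediate from the trivial lower bound $\pqn{G}\ge\log_2 n$ of Observation~\ref{obs:lower} together with the upper bound $\eqn{G}=\bigo(\log n)$ of Theorem~\ref{thm:dense_graphs}, transferred through $\pqn{G}\le\eqn{G}$.

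Where you diverge is in your sketched route to the upper bound itself. You propose to find, for each current candidate set $S$, an edge that balances $S$ and shrinks it by a fixed factor, and you correctly flag that the union bound over all reachable $S$ breaks once $|S|$ gets small. The paper's proof of Theorem~\ref{thm:dense_graphs} avoids this obstacle entirely by a different device: it uses a \emph{randomized} strategy (each round, pick a uniformly random vertex $u$ and a uniformly random neighbour $v$) and argues via a union bound over \emph{pairs} of potential targets rather than over candidate sets. For any fixed pair $x,y$, a single random edge has $u\in N_1(x)\setminus N_{\le 1}(y)$ and $v\in N_1(y)\setminus N_{\le 1}(x)$ (or the reverse) with probability $\sim 2p^2(1-p)^2$ by the codegree concentration of Lemma~\ref{lem:gnp exp3}; when this happens, the reply eliminates one of $x,y$ regardless of what the adversary does. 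After $3p^{-2}(1-p)^{-2}\log n$ independent rounds the failure probability for that pair is at most $n^{-2}$, and a union bound over the $\binom{n}{2}$ pairs leaves success probability at least $1/2$, whence a deterministic winning strategy exists by the probabilistic method.

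The payoff of the paper's route is that it needs concentration only for triples of vertices (degrees and codegrees), never for arbitrary candidate sets, so the small-$|S|$ regime you worry about simply does not arise. Your balancing-edge picture is more intuitive but, as you note, would require substantially more work to push through uniformly across scales.
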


\bigskip

In terms of complexity, we prove the following result.

\begin{theorem} \label{thm:NP}
Given a graph of diameter at most $3$ and an integer $\ell$, deciding whether $\pqn{G}\leq \ell$ or $\eqn{G}\leq \ell$ is $\NP$-complete.
\end{theorem}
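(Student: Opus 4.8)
The plan is to prove both membership in $\NP$ and $\NP$-hardness, arranging the reduction so that a \emph{single} instance handles the pair- and edge-query versions at once. For membership, I would use as a certificate an optimal strategy encoded as a decision tree whose internal nodes are queries and whose out-edges are the two possible replies. The only delicate point is bounding its size: at each node the set $S$ of targets still consistent with the replies received so far is split by the chosen query into the sets determined by the two replies, and I would argue that one may restrict attention to optimal strategies in which these splits refine a partition of the vertex set (so that no vertex is duplicated across sibling subtrees), whence the tree has at most $n$ leaves and is verifiable in polynomial time by checking that every leaf isolates one vertex and that the depth is at most $\ell$.

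For hardness I would reduce from \SetCover. Given a universe $U$ with $|U|=3q$ and a family $\mathcal{C}=\{C_1,\dots,C_m\}$ of triples, build $G$ with a vertex $e_i$ per element, a vertex $s_j$ per set with $s_j\sim e_i$ iff $x_i\in C_j$, and two auxiliary vertices: $b$ adjacent to all $s_j$ and $b'$ adjacent to all $e_i$. A short case check gives the distance table from an element target $\target=e_i$, namely $\dist{e_i}{s_j}=1$ if $x_i\in C_j$ and $\dist{e_i}{s_j}=3$ otherwise, together with $\dist{e_i}{e_{i'}}=2$, $\dist{e_i}{b}=2$, $\dist{e_i}{b'}=1$ and $\dist{b}{b'}=3$, which in particular certifies that the diameter is at most $3$. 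The crucial gadget is the \emph{edge} query $(s_j,b)$: since $\dist{e_i}{s_j}\in\{1,3\}$ while $\dist{e_i}{b}=2$, its reply reveals \emph{without ties} whether the target lies in $C_j$. I would set the threshold $\ell=q+\bigo(1)$.

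For the completeness direction (exact cover $\Rightarrow$ short strategy), given an exact cover $C_{j_1},\dots,C_{j_q}$ I would first perform the $q$ membership tests $(s_{j_k},b)$; because the cover is a partition, exactly one reply is positive and localizes the target to a single triple, after which $\bigo(1)$ further edge queries of the form $(e_a,b')$ identify it, with a constant extra budget absorbing the distinctive distance signatures of the auxiliary and set vertices. Since every query used is an edge, this bounds $\eqn{G}\le\ell$ and hence $\pqn{G}\le\ell$.

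The main obstacle is the soundness direction (short strategy $\Rightarrow$ exact cover), which I would prove contrapositively via an adversary confined to element targets. The core of the argument is a classification of \emph{all} queries by their effect on the candidate set $S\subseteq\{e_1,\dots,e_{3q}\}$: the membership tests $(s_j,b)$ split off an entire triple, whereas every other pair is weak — queries $(e_k,b')$ or $(e_k,e_{k'})$ eliminate at most one candidate, and set–set queries $(s_j,s_{j'})$ generate ties that the adversary exploits. From this I would derive that, to separate all $3q$ elements within the budget, the tests the algorithm relies on must cover $U$, and then the tight counting ``$q$ triples covering $3q$ elements must be pairwise disjoint'' forces the cover to be exact. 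The hard part will be exactly this: converting the per-query weakness into a clean budget inequality met only by an exact cover, while ruling out mixed strategies that interleave tests with cheap queries in the stronger pair-query model. Because the lower bound is proved against pair queries and the matching strategy uses only edges, the same $\ell$ works for $\pqn{G}$ and $\eqn{G}$ simultaneously, yielding $\NP$-completeness of both decision problems on diameter-$3$ graphs.
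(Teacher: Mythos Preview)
Your high-level architecture---a single \SetCover\ reduction whose upper bound uses only edge queries and whose lower bound is argued against pair queries---matches the paper's. However, the concrete gadget you propose is too thin, and both directions break.

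\textbf{Completeness.} After your $q$ membership tests $(s_{j_k},b)$, consider the reply pattern ``$b$ every time''. This is consistent not only with the target being $b$, but also with the target being any set vertex $s_j$ for $j\notin\{j_1,\dots,j_q\}$: for such $s_j$ one has $d(s_j,s_{j_k})=2>1=d(s_j,b)$. That leaves $m-q+1$ candidates, and no constant number of further queries can resolve them. So $\eqn{G}\le q+\bigo(1)$ fails unless $m-q=\bigo(1)$, which you cannot assume.

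\textbf{Soundness.} Your per-query classification is correct---each query eliminates at most three element candidates against the greedy adversary---but that bound by itself yields only $\pqn{G}\ge q-\bigo(1)$, with no dependence on whether an \emph{exact} cover exists. Concretely, if the instance has no exact cover but admits a (non-disjoint) cover of size $q+1$, then $q+1$ membership tests followed by $\bigo(1)$ queries locate any element target, so $\eqn{G}\le q+\bigo(1)$ as well. There is no gap at threshold $q+\bigo(1)$ between yes- and no-instances; the ``tight counting'' you allude to cannot be made to work here because the extra budget of even a single query is enough to accommodate one overlap in the cover.

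\textbf{What the paper's construction adds.} Two gadgets are essential. First, each element vertex carries a large pendant star of size $5(m-k)-2$; this makes the second phase (searching within one surviving star) cost $\Theta(m-k)$ rather than $\bigo(1)$, so the threshold becomes $5m$ and one extra covering set is genuinely expensive. Second, five independent copies of the element/set bipartite structure are used: in the first $5k+4$ rounds each query touches stars from a single copy, and since no copy can be fully cleared in $k$ such queries (by the no-cover hypothesis), pigeonhole guarantees a surviving star. Your single-copy, leafless graph admits neither argument.

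A minor additional point: your $\NP$-membership certificate relies on ``splits refine a partition''. They do not---vertices equidistant from $u$ and $v$ lie in $\Cl(u,v)\cap\Cl(v,u)$ and are consistent with \emph{both} replies, so sibling subtrees of the strategy tree may share targets and the $n$-leaf bound does not follow directly.
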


\subsection{Related Work}\label{sec:related_work}

The edge query model has been studied for trees and paths as a special case.
This is due to the fact that the problem for paths is the classical binary search, where most of the focus has been on noisy comparison models, see e.g.\ \cite{Ben-OrH08,DaganFGM17,DereniowskiLU21,DereniowskiTUW19,FeigeRPU94,KarpK07}.
As for the case of trees, they generalize binary search to searching partial orders with some interesting applications e.g.\ in automated program testing~\cite{Ben-AsherFN99}.
It turns out that $\eqn{T}$ for a tree $T$ can be computed in linear time~\cite{MozesOW08}.
Interestingly, this problem re-appeared under various different names like minimum height elimination trees~\cite{Liu90}, LIFO-search~\cite{GiannopoulouHT12}, tree-depth~\cite{NesetrilM06}, or edge ranking.
Due to the latter connection, a linear-time algorithm has been independently shown for trees in~\cite{LamY01}. Edge queries in trees are also of interest in applications like parallel database query processing, where one wants to find a spanning tree $T$ of the graph representing a database query that minimizes $\eqn{T}$~\cite{MakinoUI01}.
For some research on random partial orders in this context see~\cite{CarmoDKL04}.

It is natural to further generalize edge queries to the weighted variant, where the weight of an edge represents the duration of a query~\cite{LaberMP02}.
Such an extension turns out to be $\NP$-complete for some quite narrow subclasses of trees~\cite{CicaleseJLV12,CicaleseKLPV16,Dereniowski06}.
On the other hand, there has been a series of results on approximation algorithms~\cite{CicaleseJLV12,CicaleseKLPV16,Dereniowski06} with the best approximation ratio to date being $\bigo(\sqrt{\log n})$ that was proved in~\cite{DereniowskiKUZ17}.
Finally, let us mention another generalization of binary search in linear orders to graphs~\cite{Zadeh16,DereniowskiTUW19} with some interesting applications in machine learning \cite{Zadeh17}.

\medskip

Another related graph parameter is the the centroidal dimension of a graph~\cite{foucaud2014centroidal}, which may be viewed as a variant of the well known metric dimension~\cite{melter1976metric,slater1975leaves} that provides less information. Indeed, the idea is similar to the pair query model but the goal is to localize the hidden target by just a single query (usually involving more than two vertices), minimizing the number of vertices to probe. For the metric dimension, the reply returns a vector of distances between the target and each of the examined vertices. On the other hand, for the centroidal variant, the reply informs the player when the distance is zero; otherwise, for every pair of examined vertices the result of comparison of distances is returned (that is, which vertex is closer or whether distances are equal). Game theoretical variants of the above parameters have recently been intensively studied: the localization game~\cite{bonato2020bounds,bosek2018localization,carraher2012locating,seager2014locating} and centroidal localization game~\cite{bosek2018centroidal}. During the game, more queries are considered but the target can move to a neighbouring vertex between queries. The main goal is typically to minimize the number of vertices examined in each round but the number of rounds may also be minimized~\cite{BEHAGUE202280}. 

\medskip

Although the query games have not yet been studied for random graphs, there are some known results for the closely related graph parameters mentioned above.  An asymptotic behaviour of  the metric dimension of dense binomial random graphs $\Gnp$ is obtained in~\cite{bollobas2013metric}, while for sparse, subcritical $\Gnp$ and uniform random trees and forests, its asymptotic distribution is shown in~\cite{mitsche2015limiting}. The localization number for dense binomial random graphs (in particular, in the regime in which $\Gnp$ has diameter two a.a.s.) was studied in~\cite{dudek2019note}. The bounds for dense graphs were consecutively improved in~\cite{dudek2022localization}, and the arguments were extended to sparse graphs. The localization game was also recently studied for random geometric graphs~\cite{lichev2021localization}. We direct the reader to the book~\cite{bonato2017graph} for more graph searching games in the context of random graphs.

\section{Preliminaries}

\subsection{Reformulation of the Process}\label{sec:reformulation}

In this subsection, we provide a reformulation of the game that will be easier to deal with when proving our results. In particular, this reformulation will show that the process can be viewed as a combinatorial game. Indeed, it is convenient to think about this game as the target is \emph{not} selected a priori by the adversary but rather as a process in which after a $t$-th query the set $V_t$ of \emph{potential targets} is maintained by both players.

Let us denote by $\dist{u}{v}$ the distance between $u$ and $v$ in a given graph $G=(V,E)$.
For any pair of vertices $u$ and $v$, let
$$\Cl(u,v)=\{ x \in V \st \dist{u}{x} \leq \dist{v}{x} \},$$
that is, $\Cl(u,v)$ is the set of vertices that are closer to $u$ than to $v$, or at the same distance from both $u$ and $v$.
For a reply $u$ to the query on $u,v$, we say that a vertex $x$ is \emph{compatible} with the reply if $x\in \Cl(u,v)$.

With these definitions at hand, we may formally define $V_t$ to be the set of vertices of $G$ that are compatible with each of the first $t$ replies. In other words, each vertex from the set $V_t$ satisfies all inequalities associated with the first $t$ queries and the corresponding replies. On the other hand, no vertex from $V \setminus V_t$ has this property.
The game starts with $V_0 = V$. At the beginning of round $t \in \Nat$, the first player (the algorithm) presents a pair of vertices $(u_t, v_t)$ which partitions $V_{t-1}$, the set of potential targets from the previous round, into three sets: 
\begin{itemize}
    \item $V_t^< := V_{t-1} \cap (\Cl(u_t,v_t) \setminus \Cl(v_t,u_t)$), the set of vertices of $V_{t-1}$ that are closer to $u_t$ than to $v_t$,
    \item $V_t^> := V_{t-1} \cap (\Cl(v_t,u_t) \setminus \Cl(u_t,v_t)$), the set of vertices of $V_{t-1}$ that are closer to $v_t$ than to $u_t$,
    \item $V_t^= := V_{t-1} \cap (\Cl(u_t,v_t) \cap \Cl(v_t,u_t)$), the set of vertices of $V_{t-1}$ that are at the same distance from both $u_t$ and $v_t$.
\end{itemize}
The second player (the adversary) has now two options: fix $V_t = V_{t-1} \cap \Cl(u_t,v_t) = V_t^< \cup V_t^=$ (that is, reply with vertex $u_t$) or fix $V_t = V_{t-1} \cap \Cl(v_t,u_t) = V_t^> \cup V_t^=$ (that is, reply with vertex $v_t$). After making the decision, the round is over and we move to the next round.
The game ends when $|V_t|=1$ since the only element in $V_t$ must be the target.
With this point of view, one may think about the adversary as also being adaptive in the sense that the goal of the adversary is to provide replies which result in maximizing the value of $t$ for which $V_t$ is a singleton. 

\subsection{Some Basic Properties}\label{sec:basic_properties}

In order to warm-up, let us start with a few simple observations highlighting some properties of the two graph parameters we are concerned with in this paper.

\begin{observation} \label{obs:lower}
For any connected graph $G$ on $n$ vertices, 
$$
\log_2 n \le \pqn{G} \le \eqn{G} \le n-1.
$$
In fact, there exists a strategy of the algorithm that in each round eliminates at least one vertex from the search space. (This, of course, implies an upper bound of $n-1$.)
\end{observation}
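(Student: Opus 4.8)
The plan is to establish the three inequalities separately, working entirely within the combinatorial reformulation of Subsection~\ref{sec:reformulation}. The middle inequality $\pqn{G} \le \eqn{G}$ is already recorded in Subsection~\ref{sec:problem}, since every edge query is a legal pair query, so it suffices to prove the outer bounds $\log_2 n \le \pqn{G}$ and $\eqn{G} \le n-1$.

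For the lower bound I would use a simple \emph{halving} adversary. At the start of round $t$ the algorithm presents a pair $(u_t,v_t)$ splitting $V_{t-1}$ into $V_t^<$, $V_t^>$, and $V_t^=$, and the adversary's two options retain either $V_t^< \cup V_t^=$ or $V_t^> \cup V_t^=$. These two sets cover $V_{t-1}$, so at least one of them has size at least $|V_{t-1}|/2$; letting the adversary always keep the larger one gives $|V_t| \ge |V_{t-1}|/2$, hence $|V_t| \ge n/2^t$ by induction. Since the game terminates only when $|V_t|=1$, at the terminal round we must have $n/2^t \le 1$, i.e.\ $t \ge \log_2 n$. As this applies to pair queries (and a fortiori to edge queries), we obtain $\pqn{G} \ge \log_2 n$.

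For the upper bound, the substantive claim is that the algorithm has an edge-query strategy eliminating at least one candidate in every round with $|V_{t-1}| \ge 2$; iterating at most $n-1$ times reduces $|V_t|$ to $1$. To force an elimination regardless of the reply, it is enough to select an edge $u_t v_t$ for which \emph{both} $V_t^<$ and $V_t^>$ are nonempty: then reply $u_t$ discards the nonempty set $V_t^>$, while reply $v_t$ discards the nonempty set $V_t^<$. The construction producing such an edge is the crux. Pick any two distinct candidates $a,b \in V_{t-1}$ and a shortest $a$--$b$ path $a = x_0, x_1, \ldots, x_k = b$ in $G$ (which exists since $G$ is connected), and query the edge $(u_t,v_t) = (x_0,x_1)$. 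Then $\dist{x_0}{a} = 0 < 1 = \dist{x_1}{a}$ places $a \in V_t^<$, while $\dist{x_1}{b} = k-1 < k = \dist{x_0}{b}$ places $b \in V_t^>$, so both sides are occupied.

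The main obstacle is precisely this last construction. The tempting shortcut of querying an edge with both endpoints inside $V_{t-1}$, which trivially eliminates an endpoint, fails because $V_{t-1}$ may induce an independent set and no such edge need exist. Using an edge on a shortest path between two surviving candidates circumvents this: the endpoints $x_0, x_1$ of the queried edge need not themselves be candidates, yet $a$ and $b$ land on opposite sides of the query, forcing an elimination no matter how the adversary replies.
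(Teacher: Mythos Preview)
Your proof is correct and follows essentially the same route as the paper: the halving adversary for the lower bound and the edge taken as the first step of a shortest path between two surviving candidates for the upper bound are exactly the arguments the paper uses. Your write-up is slightly more explicit in justifying why $a$ and $b$ land in $V_t^<$ and $V_t^>$ respectively, but the substance is identical.
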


\begin{proof}
The inequality $\pqn{G} \le \eqn{G}$ follows immediately from the fact that pair queries are more general than edge queries. Hence, any strategy asking edge queries can be used by more flexible algorithm asking pair queries.

The lower bound of $\log_2 n$ is due to the fact that there exists a strategy (simple and greedy) for the adversary that guarantees that each pair query reduces the search space by at most half. Indeed, recall that in round $t \in \Nat$, set $V_{t-1}$ is partitioned into three sets, $V_t^<$, $V_t^>$, and $V_t^=$ (the partition depends on the algorithm) and then $V_t = V_t^< \cup V_t^=$ or $V_t = V_t^> \cup V_t^=$ (the choice belongs to the adversary). Hence, indeed, there exists a strategy for the adversary that guarantees that for each $t \in \Nat$ we have $|V_t| \ge |V_{t-1}| / 2$. Hence, $|V_t| \ge |V_0| / 2^t = n / 2^t$ implying that $|V_t| > 1$ if $t < \log_2 n$. The lower bound holds.

To prove the upper bound of $n-1$, let us present another simple strategy for the algorithm, this time performing edge queries, that guarantees that in each round at least one vertex is eliminated from the search space. Assume that $|V_{t-1}| \ge 2$, and let $x$, $y$, $x \neq y$, be any two vertices from $V_{t-1}$. Let $(x=z_0, z_1, \ldots z_k = y)$ be any shortest path between $x$ and $y$. The algorithm, for example, may select vertices $u_t = x = z_0$ and $v_t = z_1$ as its edge query to eliminate $x$ or $y$ from the search space, giving $|V_t| \le |V_{t-1}| - 1$. This finishes the proof of the observation.
\end{proof}

The next observation shows that the two easy bounds we proved above are sharp. We leave it as an easy exercise for the reader and only provide a hint that edge queries can mimic the binary search on a path.

\begin{observation} \label{obs:simple}
For any $n \in \N \setminus \{1\}$, 
$\eqn{K_n} = \pqn{K_n} = n-1$, 
$\eqn{K_{1,n-1}} = \pqn{K_{1,n-1}} = n-1$, 
$\eqn{P_n} = \pqn{P_n} = \lceil\log_2n\rceil$.
\end{observation}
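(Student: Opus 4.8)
The plan is to handle the three graphs separately, in each case combining the generic bounds of Observation~\ref{obs:lower} with a short tailored argument for the remaining direction. Throughout I use the reformulation of Subsection~\ref{sec:reformulation}, tracking how a query $(u_t,v_t)$ partitions $V_{t-1}$ into $V_t^{<}$, $V_t^{>}$, $V_t^{=}$.

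For $K_n$, since every pair of distinct vertices is adjacent, edge and pair queries coincide, so it suffices to analyze pair queries. First I would observe that for an arbitrary query $(u,v)$ we have $\dist{u}{x}=\dist{v}{x}=1$ for every $x\notin\{u,v\}$, whence $V_t^{<}=\{u\}\cap V_{t-1}$, $V_t^{>}=\{v\}\cap V_{t-1}$, and every other potential target lands in $V_t^{=}$. Thus each reply discards at most one vertex, so $|V_t|\ge|V_{t-1}|-1$ regardless of the adversary's choice, forcing at least $n-1$ queries. This matches the bound $\eqn{K_n}\le n-1$ of Observation~\ref{obs:lower}, and together with $\pqn{K_n}\le\eqn{K_n}$ it pins both parameters to $n-1$.

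For the star $K_{1,n-1}$ with centre $c$ and leaves $\ell_1,\dots,\ell_{n-1}$, the upper bound is again Observation~\ref{obs:lower}, so only the lower bound $\pqn{K_{1,n-1}}\ge n-1$ needs work (the edge bound then follows from $\pqn{G}\le\eqn{G}$). The key computation is that for every query the smaller of the two strict sides has size at most one: for a query $(c,\ell_i)$ one finds $V_t^{>}\subseteq\{\ell_i\}$, since every other leaf is strictly closer to $c$; while for a query $(\ell_i,\ell_j)$ one finds $V_t^{<}\subseteq\{\ell_i\}$ and $V_t^{>}\subseteq\{\ell_j\}$, with $c$ and all remaining leaves equidistant and hence in $V_t^{=}$. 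Since $\min(|V_t^{<}|,|V_t^{>}|)\le 1$ in every case, the adversary, always discarding the smaller strict side, eliminates at most one vertex per query and forces $n-1$ queries.

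For the path $P_n$ with vertices $1,\dots,n$, the lower bound is immediate: Observation~\ref{obs:lower} gives $\pqn{P_n}\ge\log_2 n$, and as the parameter is an integer this improves to $\lceil\log_2 n\rceil$. For the matching upper bound I would exhibit an edge-query strategy mimicking binary search, resting on the invariant that $V_t$ is always a block of consecutive integers: starting from $V_0=\{1,\dots,n\}$, an edge query $(i,i+1)$ splits the current block $\{a,\dots,b\}$ into $\{a,\dots,i\}$ and $\{i+1,\dots,b\}$, because on a path every $x\le i$ is strictly closer to $i$ and every $x\ge i+1$ is strictly closer to $i+1$, so $V_t^{=}=\emptyset$ and no ambiguity arises. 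Choosing $i$ as the midpoint of $\{a,\dots,b\}$ halves the block whatever the reply, so after $\lceil\log_2 n\rceil$ edge queries the block is a singleton, yielding $\eqn{P_n}\le\lceil\log_2 n\rceil$ and hence $\lceil\log_2 n\rceil\le\pqn{P_n}\le\eqn{P_n}\le\lceil\log_2 n\rceil$. The only point demanding care is exactly this path case: one must check that the edge $(i,i+1)$ always lies inside the current block and that the block invariant is preserved, so that the clean two-way split of binary search is genuinely realizable with the more restrictive edge queries rather than pair queries.
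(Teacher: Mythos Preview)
Your argument is correct in all three cases. The paper itself does not give a proof of this observation at all: it leaves it as an exercise, offering only the hint that edge queries can mimic binary search on a path. Your treatment of $K_n$ and $K_{1,n-1}$ via the observation that $\min(|V_t^{<}|,|V_t^{>}|)\le 1$ for every query, and your treatment of $P_n$ via the block-halving edge-query strategy, are exactly the intended arguments. The small caveat you flag at the end is harmless: the edge $(i,i+1)$ need not lie ``inside'' the block in any strong sense---it only needs to be an edge of $P_n$---and choosing $i$ with $a\le i<b$ (which any midpoint choice satisfies whenever $|V_{t-1}|\ge 2$) already guarantees the two-way split $\{a,\dots,i\}$ versus $\{i+1,\dots,b\}$ with $V_t^{=}=\emptyset$, so the block invariant is automatic.
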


The last observation shows that the difference between the two variants of the game may be asymptotically as large as possible.

\begin{observation}
For any $n \in \N \setminus \{1\}$, there exists a graph $G$ on $\Theta(n)$ vertices such that $\eqn{G} = \Omega(n)$ and $\pqn{G} = \bigo(\log n)$.
\end{observation}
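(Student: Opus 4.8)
The plan is to produce an explicit graph, and the first thing I would record is why no \emph{tree} can work: for any tree $T$ one has $\pqn{T} = \Omega(\Delta(T))$. Indeed, if $c$ is a cut vertex of degree $D$, then for any pair query $(u,v)$ and any branch $B$ at $c$ containing neither $u$ nor $v$, every $x \in B$ satisfies $\dist{u}{x} = \dist{u}{c}+\dist{c}{x}$ and $\dist{v}{x} = \dist{v}{c}+\dist{c}{x}$, so the whole branch lands on a single side (or is entirely equidistant); hence one query can resolve at most the two branches meeting $u$ and $v$, and the adversary can keep $\Omega(D)$ branches ambiguous. Since large $\eqn{}$ is driven by high degree while large degree forces large $\pqn{}$ in a tree, the separating example must exploit cycles, whose shortcuts let pair queries separate vertices that edge queries cannot.

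Concretely I would take $G$ to be a clique $C=\{x_0,\dots,x_{m-1}\}$ together with, for each bit position $j\in\{0,\dots,\lceil\log_2 m\rceil-1\}$, two \emph{reference} vertices $r_j,r_j'$, where $r_j$ is joined to exactly those $x_i$ whose $j$-th bit equals $1$ and $r_j'$ to exactly those whose $j$-th bit equals $0$. This graph is connected and has $n:=m+\Theta(\log m)=\Theta(m)$ vertices, and each clique vertex lies at distance $1$ or $2$ from each reference according to its bits, distance $2$ being realised through a clique neighbour.

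For the lower bound $\eqn{G}=\Omega(n)$ I would exhibit an adversary that always answers so as to retain as many clique vertices as possible, and the computation to carry out is that every edge query destroys at most one clique vertex. For a clique edge $(x_i,x_{i'})$ all other clique vertices are equidistant and survive, so at most one of $x_i,x_{i'}$ is lost; for a reference edge $(r_j,x_i)$ no clique vertex is strictly closer to $r_j$ than to $x_i$, so the adversary answers $x_i$ and keeps all of $C$. Thus at least $m-1=\Omega(n)$ edge queries are forced before $C$ is narrowed to a single candidate.

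For the upper bound $\pqn{G}=\bigo(\log n)$, the crucial point is that the pair query $(r_j,r_j')$ splits $C$ \emph{cleanly} by bit $j$: an $x_i$ with $j$-th bit $1$ is strictly closer to $r_j$, one with bit $0$ strictly closer to $r_j'$, and no clique vertex is equidistant, so the adversary has nothing to hide in $\Cl(r_j,r_j')\cap\Cl(r_j',r_j)$. Running these $\lceil\log_2 m\rceil$ queries pins down every bit of the surviving clique candidate, leaving at most one clique vertex together with the $\bigo(\log n)$ references, which are then cleaned up in $\bigo(\log n)$ further queries via the one-vertex-per-query strategy of Observation~\ref{obs:lower}. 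I expect the main obstacle to be the simultaneous design of the reference gadget: the very same vertices must make every edge query unbalanced (so the adversary keeps the whole clique alive) yet give pair queries a \emph{tie-free} halving; the complementary pair $(r_j,r_j')$ is precisely what removes the equidistant class that would otherwise let the adversary stall the binary search, and checking that this choice simultaneously leaves edge queries powerless is the crux of the argument.
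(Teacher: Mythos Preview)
Your construction is correct and proves the observation, but it differs from the paper's. Both arguments place a clique of size $\Theta(n)$ at the core---this is what forces $\eqn{G}=\Omega(n)$, since any edge query with at least one endpoint in the clique can eliminate at most one clique vertex---but the addressing gadget that lets pair queries run a binary search is different. You attach $2\lceil\log_2 m\rceil$ bit-indicator vertices $r_j,r_j'$ so that the pair query $(r_j,r_j')$ reads off bit $j$ of the surviving clique vertex with no ties; the paper instead overlays a full binary tree of height $k$ on $2^k$ clique vertices, joining each internal node to all leaves of its subtree, and has the algorithm descend the tree by querying the two children of the current node, leaving a path of $k+1$ candidates after $k$ queries. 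Your version is arguably more elementary: distances from references to clique vertices are only $1$ or $2$, the bit decoding is explicit, and the verification that edge queries are powerless is a two-line case check. The paper's construction has a recursive flavour but needs a more delicate distance analysis at each level. Both are realisations of the same idea---augment a clique with a logarithmic-size structure that pair queries can exploit but edge queries cannot.
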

\begin{proof}
For a fixed $n \in \N \setminus \{1\}$, let $k=k(n)=\lfloor\log_2n\rfloor$. Let $T_k$ be a full binary tree of height $k$. (The height is measured as the length of a path from the root to any of the leaves.)
We say that the vertices at distance $i$ from the root in $T_k$ are at \emph{level} $i$, $i\in\{0,\ldots,k\}$. We denote the set of vertices at level $i$ by $L_i$; in particular, $L_k$ is the set of the leaves of $T_k$ and $L_0$ consists of only one vertex, the root of $T_k$. It will be convenient to use $T_k(x)$, $x\in V(G)$, to denote the subtree of $T_k$ induced by the set consisting of $x$ and all of its descendants in $T_k$. 

We construct a graph $G = G_k$ on the same vertex set as $T_k$ (that is, $V(G_k) = V(T_k)$). 
Note that $|V(G_k)| = 2^{k+1} - 1$, hence $n \le |V(G_k)| < 2n$.
The edge set $E(G_k)$ is defined as follows. First of all, we join any two leaves of $T_k$ by an edge, that is, the set $L_k$ induces a complete graph in $G_k$. Then, for any non-leaf $u$ of $T_k$ (that is, $u \in V(T_k) \setminus L_k$) we join $u$ to all the leaves of the subtree $T_k(u)$ of $T_k$ rooted at $u$. 
Note that not all edges of tree $T_k$ are present in $G_k$---see Figure~\ref{fig:obsdif} for an example of this construction. 

\begin{figure}[htb]
	\begin{center}
		\includegraphics[width = 0.5\textwidth]{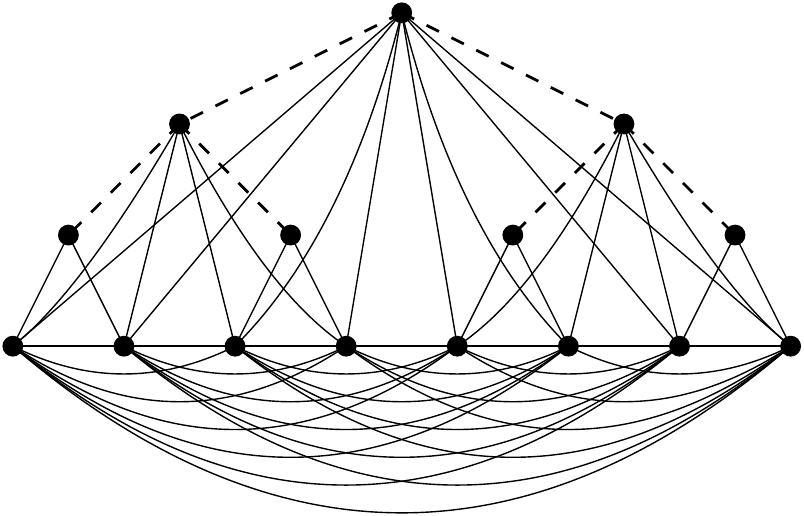}
	\end{center}
	\caption{The graph $G_3$. The dashed edges are edges of $T_3$ that are \emph{not} present in $G_3$. }\label{fig:obsdif}
\end{figure}

\medskip

Let us start with providing a strategy for the algorithm that proves that $\pqn{G_k} \le 2k$, hence $\pqn{G_k}=\bigo(\log n)$.
We will distinguish two phases. During the first phase the algorithm performs $k$ pair queries starting from vertices that are close to the root and moving toward the leaves, each time essentially removing one branch of $T_k$ from the search space.

Formally, as always, the algorithm starts with $V_0 = V(G_k) = V(T_k)$ and $u_0$ being the root of $T_k$.
In the first round, the algorithm queries two children of $u_0$ that we denote by $v^1_0$ and $v^2_0$. By symmetry of graph $G_k$, without loss of generality we may assume that the reply is $v^1_0$. Note that all vertices in $T_k(v^1_0)$ are closer to $v^1_0$ than to $v^2_0$. The only vertex that is at the same distance from both $v^1_0$ and $v^2_0$ is the root of $T_k$ (that is, $V_1^= = \{u_0\}$). Hence, the algorithm moves to the next round with $V_1 = V_0 \cap \Cl(v^1_0, v^2_0) = V_0 \setminus V(T_k(v^2_0))$ and we prepare ourselves for the next round by fixing $u_1 = v^1_0$. Note that $V_1 = V(T_k(u_1)) \cup \{u_0\}$, where $\{u_0\}$ is the set of ancestors of $u_1$ in $T_k$.
We recursively follow the same strategy as for the first round. Suppose that at the end of round $t$, for some $1 \le t < k$, we have a vertex $u_t$ at level $t$ identified such that $V_t = V(T_k(u_t)) \cup \{u_0, \dots u_{t-1}\}$, where $u_i$ is the ancestor of $u_t$ at level $i < t$. At round $t+1$, the algorithm queries two sons of $u_t$, $v^1_t$ and $v^2_t$. As before, because of symmetry, without loss of generality we may assume that the reply is $v^1_t$. Then, we fix $u_{t+1} = v^1_t$ such that 
$$
V_{t+1} = V_t \cap \Cl(v^1_t, v^2_t) = V_t \setminus V(T_k(v^2_t)) = V(T_k(u_{t+1})) \cup \{u_0, \dots u_{t}\}.
$$   
It follows that after $k$ pair queries, regardless of the strategy of the adversary, the algorithm learns that the target is hidden in the set $V_k = \{u_0, \dots, u_k\}$ where $u_k$ is some leaf of $T_k$, while $u_i$ for $i < k$ is the ancestor of $u_k$ at level $i$. 

During the second phase, by Observation~\ref{obs:lower}, the algorithm may eliminate at least one vertex at a time. The target is then identified in at most $k$ additional rounds and so the total number of pair queries is at most $2k=\bigo(\log n)$.

\medskip

We now move to the proof of the fact that $\eqn{G_k}=\Omega(n)$.
In order to simplify the argument, we assume that the algorithm is provided with some additional information, namely, that the target is in $L_k$. (In other words, we start the game with $V_0 = L_k$ instead of $V_0 = V(G_k)$.) We will show that even with this additional information, the algorithm cannot quickly identify the target.

Consider a query to an edge $uu' \in E(G_k)$. Since each edge of $G_k$ has at least one leaf from $T_k$, without loss of generality we may assume that $u' \in L_k$. If $u \not \in L_k$, then the adversary might reply with $u'$ which provides no new information to the algorithm.
Thus, the algorithm is forced to query edges within the complete graph induced by $L_k$. However, since all vertices in $L_k$ are at distance one from each other, each such query eliminates at most one vertex from the search space. It follows that the number of edge queries that is needed to identify the target is at least $|L_k|-1 = 2^k-1 \ge \frac{n}{2}$. 
This finishes the proof of the lower bound of $\eqn{G_k}$ and so also the proof of the observation. 
\end{proof}

\section{Random Graphs}\label{sec:random_graphs}

Let us first state a specific instance of Chernoff's bound that we will find often useful. Let $X \in \textrm{Bin}(n,p)$ be a random variable with the binomial distribution with parameters $n$ and $p$. Then, a consequence of Chernoff's bound (see e.g.~\cite[Corollary~2.3]{JLR}) is that 
\begin{equation}\label{chern}
\Prob \Big( |X-\E [X]| \ge \eps \ \E [X] \Big) \le 2\exp \left( - \frac {\eps^2 \ \E [X]}{3} \right)  
\end{equation}
for  $0 < \eps < 3/2$. However, at some point we will need need a stochastic upper bound for $X$ when $\E[X]$ is small. In such situations the following bound can be applied instead of~(\ref{chern}) (see e.g.~\cite[Theorem~2.1]{JLR}):
\begin{equation}\label{chern2}
\Prob( X \ge \E [X] + u ) \le \exp \left( - \frac {u^2}{2(\E [X] + u/3)} \right).
\end{equation}

\subsection{Lower Bound}

Let us start with the following expansion properties of random graphs. For a vertex $v \in V$, let $N_i(v)$ and $N_{\le i}(v) = \bigcup_{j=0}^i N_i(v)$ denote the set of vertices at distance $i$ from $v$ and the set of vertices at distance at most $i$ from $v$, respectively. For any $Z \subseteq V$, let $N_i(Z) = \bigcup_{v \in Z} N_i(v)$ and $N_{\le i}(Z) = \bigcup_{v \in Z} N_{\le i}(v)$. 

\begin{lemma}\label{lem:gnp exp}
Suppose that $\omega \log n \le d=pn \le n / \omega = o(n)$, where $\omega=\omega(n)$ is any function tending to infinity as $n \to \infty$ such that $\omega \le \log n$.  

Then, the following properties hold a.a.s.\ for $G = (V,E) \in \Gnp$.
\begin{itemize}
\item [(a)] Suppose that for some $i = i(n) \in \N$ and $Z \subseteq V$ with $z=|Z|$ we have $zd^i = o(n)$. Then,
$$
\left| N_{\le i}(Z) \right| = \left(1+ \bigo \left( \frac {1}{\sqrt{\omega}}\right) + \bigo \left( \frac{zd^i}{n} \right) \right) d^i |Z| \sim d^i |Z|,
$$
and so also
$$
\left| N_i(Z) \right| = \left(1+ \bigo \left( \frac {1}{\sqrt{\omega}}\right) + \bigo \left( \frac{zd^i}{n} \right) \right) d^i |Z| \sim d^i |Z|.
$$
\item [(b)] Let $i=i(n) \in \N$ be the largest integer such that $d^i = o(n)$. Let 
$$
z := \frac {n}{d^i} \log (d^i) \left(1 - \frac {1}{\omega^{1/3}} \right) = \Theta \left( \frac {n}{d^i} \log n \right).
$$
Then, for any $Z \subseteq V$ with $|Z|=z$ there are at least two vertices at distance at least $i+1$ from $Z$, that is, $|N_{\le i}(Z)| \le n-2$.
\end{itemize}
\end{lemma}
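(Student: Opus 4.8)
The plan is to fix a single set $Z$ with $|Z|=z$, prove that the number $U := n - |N_{\le i}(Z)|$ of vertices at distance more than $i$ from $Z$ is at least $2$ with probability so overwhelmingly close to $1$ that a union bound over all $\binom{n}{z}$ candidate sets $Z$ still succeeds. First I would record two elementary consequences of the hypotheses. Since $i$ is the largest integer with $d^i = o(n)$ we have $d^{i+1} = \Omega(n)$, so $(i+1)\log d \ge \log n - \bigo(1)$; combined with $i\ge 1$ (forced by $d \ge \omega \log n$ and $d = o(n)$), i.e.\ $\log(d^i) \ge \log d$, this yields the key inequality $\log(d^i) \ge (\tfrac12 - o(1))\log n$. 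I would also note $z d^{i-1} = \Theta(n\log n/d) = o(n)$ (using $d \ge \omega\log n$), so part~(a) applies with $i-1$ in place of $i$ and gives $|N_{\le i-1}(Z)| = (1 + \bigo(\omega^{-1/2}))\, d^{i-1} z$ and $|N_{i-1}(Z)| = (1+\bigo(\omega^{-1/2}))\, d^{i-1}z$, both $o(n)$.

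The engine of the argument is that, after exposing the breadth-first layers $Z = N_0(Z), N_1(Z), \dots, N_{i-1}(Z)$ from $Z$, the count $U$ is \emph{conditionally binomial}. Writing $W = N_{\le i-1}(Z)$ and $B = N_{i-1}(Z)$, the BFS exposes only edges incident to $N_{\le i-2}(Z)$, so for every $v \notin W$ the edges from $v$ to $B$ remain unexposed; such a $v$ lies at distance exactly $i$ from $Z$ iff it has a neighbour in $B$, and these events are independent across $v$. Hence, conditioned on the BFS outcome, $U \sim \textrm{Bin}\bigl(n - |W|,\, (1-p)^{|B|}\bigr)$. I would then introduce the good event $\mathcal G_Z$ that $|W| \le W_0$ and $|B| \le B_0$ with $W_0, B_0 = (1 + C\omega^{-1/2})\, d^{i-1} z$; on $\mathcal G_Z$ the variable $U$ stochastically dominates $\textrm{Bin}\bigl(n - W_0,\, (1-p)^{B_0}\bigr)$, whose mean I call $\mu_0$. (For $i = 1$ this step is vacuous: $W = B = Z$ deterministically, so $\mathcal G_Z$ holds with $|B| = z$.)

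A direct computation gives $\mu_0 \sim n(1-p)^{B_0}$ with $p B_0 = (1 + \bigo(\omega^{-1/2}))\, d^i z / n = (1 + \bigo(\omega^{-1/2}))\log(d^i)(1 - \omega^{-1/3})$, so that, writing $L = \log(d^i)$,
$$
\log \mu_0 = (\log n - L) + L\omega^{-1/3}\bigl(1 - \bigo(\omega^{-1/6})\bigr) \ge (\tfrac12 - o(1))\,(\log n)^{2/3},
$$
using $L \ge (\tfrac12-o(1))\log n$ and $\omega \le \log n$. Thus $\mu_0 \to \infty$ faster than any power of $\log n$, while $\log\binom{n}{z} \sim z\log(en/z) \sim zL = nL^2/d^i$; comparing exponents shows $\mu_0 \gg \log\binom{n}{z}$. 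Since $\Prob(\textrm{Bin}(N,q) \le 1) \le (1 + Nq)e^{-q(N-1)}$, on $\mathcal G_Z$ we get $\Prob(U \le 1 \mid \mathcal G_Z) \le e^{-(1-o(1))\mu_0} \le e^{-\mu_0/2}$, hence $\Prob(\exists\,Z,\,|Z|=z,\,U\le 1) \le \binom{n}{z}\bigl(e^{-\mu_0/2} + \Prob(\neg\mathcal G_Z)\bigr)$. The first term tends to $0$ because $\mu_0 \gg \log\binom{n}{z}$, which proves the lemma once the second term is controlled.

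The main obstacle is twofold. The delicate point in the computation above is that $\log\mu_0$ is sensitive to $pB_0$ down to additive order $L\omega^{-1/3}$, so the multiplicative expansion error $\bigo(\omega^{-1/2})$ from part~(a) must be tracked: it contributes only $\bigo(\omega^{-1/2} L) = o(\omega^{-1/3} L)$, which is precisely why the factor $(1 - \omega^{-1/3})$ — rather than $(1 - \omega^{-1/2})$ — in the definition of $z$ leaves exactly enough room for $\mu_0$ to diverge. The remaining technical step is bounding $\Prob(\neg\mathcal G_Z)$ by $e^{-\omega(zL)}$ for a \emph{fixed} $Z$, strong enough to survive multiplication by $\binom{n}{z} = e^{(1+o(1))zL}$. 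I would obtain this by applying the Chernoff bounds~\eqref{chern}, \eqref{chern2} layer by layer at relative deviation $\Theta(\omega^{-1/2})$: for $i \ge 2$ one has $d^{i-1} \ge \sqrt{d^i} \ge n^{1/4}$, so each layer's mean $\Theta(d^{j}z)$ is at least $n^{1/4} z \gg \omega L z$, whence the upper-tail failure probability at each of the $\bigo(\log n)$ layers is $e^{-\Omega(\omega^{-1} n^{1/4} z)} = e^{-\omega(zL)}$, as required. Assembling these gives $\binom{n}{z}\Prob(\neg\mathcal G_Z) \to 0$ and completes the union bound.
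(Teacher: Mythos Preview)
Your approach is essentially identical to the paper's: fix $Z$, expose the BFS layers up to level $i-1$, note (via part~(a)) that $|N_{\le i-1}(Z)| \sim zd^{i-1}$, observe that the number $U$ of vertices missing $N_{\le i}(Z)$ is then binomial with mean $\mu_0 \sim (n/d^i)\exp\bigl((1+o(1))L\omega^{-1/3}\bigr)$, and check that $\mu_0$ dominates $\log\binom{n}{z} \le zL \le (n/d^i)\log^2 n$ so that the union bound over all $Z$ goes through; the paper carries out exactly this computation, deriving the cruder but sufficient $\mu_0 \ge n\log^3 n/d^i$ from $L\omega^{-1/3} \ge 3\log\log n$.

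One remark on your final paragraph: the separate layer-by-layer Chernoff argument for $\Prob(\neg\mathcal G_Z) = e^{-\omega(zL)}$ is both unnecessary and, as written, not quite right. The claim ``$d^{i-1}\ge\sqrt{d^i}\ge n^{1/4}$, so each layer's mean $\Theta(d^j z)$ is at least $n^{1/4}z$'' fails at $j=1$ when $d = n^{o(1)}$ (e.g.\ $d=\log^2 n$ gives $d\not\ge n^{1/4}$), and a uniform relative deviation $\Theta(\omega^{-1/2})$ per layer accumulates to $(1+O(\omega^{-1/2}))^{i-1}$, which need not stay $1+O(\omega^{-1/2})$ when $i\to\infty$. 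The fix is simply to drop this paragraph: part~(a), which you have already invoked, is proved (with shrinking per-layer deviations $\eps_j = \Theta(d^{-(j-1)/2}\omega^{-1/2})$) to hold \emph{simultaneously for all $Z$} with failure probability $o(1)$, so $\Prob(\exists\,Z:\neg\mathcal G_Z)=o(1)$ directly and no multiplication by $\binom{n}{z}$ is needed on that term. This is precisely how the paper handles it.
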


\begin{proof}
In order to investigate the expansion property of neighbourhoods, let $Z \subseteq V$, $z=|Z|$, and consider the random variable $X = X(Z) = |N_{\le 1}(Z)|$. We will bound $X$ in a stochastic sense. There are two things that need to be estimated: the expected value of $X$, and the concentration of $X$ around its expectation.

Since for $x=o(1)$ we have $(1-x)^z = e^{-xz(1+\bigo(x))}$ and also $e^{-x}=1-x+\bigo(x^2)$, it is clear that
\begin{eqnarray}
\E [X] &=& n - \left(1- \frac {d}{n-1} \right)^z (n-z) \nonumber \\
&=& n - \exp \left( - \frac {dz}{n} (1+\bigo(d/n)) \right) (n-z) \nonumber \\
&=& dz (1+\bigo(dz/n) ), \label{eq:expX}
\end{eqnarray}
provided $dz = o(n)$. It follows from Chernoff's bound~\eqref{chern} applied with $\eps = 3/{\sqrt{\omega}}$ that the expected number of sets $Z$ satisfying  
$$
\big| |N_{\le 1}(Z)| - \E[|N_{\le 1}(Z)|] \big| > \eps d|Z|
$$ 
is at most 
$$
2 \binom{n}{z} \exp \left( - \frac {\eps^2 d z}{3+o(1)} \right) \le 2 n^z \exp \left( - \frac {\eps^2 (\omega \log n) z }{3+o(1)} \right) = 2 n^z \exp \Big( - (3+o(1)) z \log n \Big) = o(1),
$$
since $d \geq \omega \log n$. Hence the statement holds for $i=1$ a.a.s. 

Now, we will estimate the cardinalities of $N_{\le i}(Z)$ up to the $i$'th iterated neighbourhood, provided $zd^i = o(n)$ and thus $i = O(\log n /\log \log n)$. Suppose then that for some $i \ge 2$, $Y = N_{\le i-1}(Z)$ satisfies $|Y| \sim d^{i-1} z$ and $|Y| = o(n/d)$. It follows from~(\ref{eq:expX}) and~(\ref{chern}) (this time applied with $\eps = 4 d^{-(i-1)/2} \omega^{-1/2}$) that with probability at least $1-\beta$,
$$
|N_{\le 1}(Y)| = d |Y| \left( 1+ \bigo \left( d|Y|/n \right) + \bigo \left( d^{-(i-1)/2} \omega^{-1/2} \right) \right),
$$
where the bounds in $\bigo()$ notation are uniform. The failure probability is at most 
$$
\beta = 2 \exp \left( - \frac {\eps^2 d |Y|}{3+o(1)} \right) = 2 \exp \left( - \frac {\eps^2 d^i z}{3+o(1)} \right) \le \exp \left( - 5 d z / \omega \right) \le \exp( - 5 z \log n) \le n^{-z} n^{-4}.
$$
As we want a result that holds a.a.s., we may assume this statement holds deterministically, since there are only $\bigo(n^z \log n)$ choices for $Z$ and $i$. 

Given this assumption, we have good bounds on the ratios of the cardinalities of $N_{\le 1}(Z)$, $N_{\le 1}(N_{\le 1}(Z)) = N_{\le 2}(Z)$, and so on. Since $i=O(\log n / \log \log n)$ and $\sqrt{\omega} \le (\log n)^{1/2}$, the cumulative multiplicative error term is
\begin{align*}
(1+&\bigo(dz/n) + \bigo(1/\sqrt{\omega})) \prod_{j=2}^i \left( 1+ \bigo \left( d^jz/n \right) + \bigo \left( \omega^{-1/2} d^{-(j-1)/2} \right) \right) \\
&= (1+\bigo(1/\sqrt{\omega}) + \bigo(d^iz/n) )  \prod_{j=5}^{i-2} \left( 1+ \bigo \left( \log^{-2} n \right) \right) = (1+ \bigo(1/\sqrt{\omega}) + \bigo(d^iz/n) ),
\end{align*}
and the proof of part (a) is complete.

\bigskip

In order to prove part (b), let us fix any set of vertices $Z \subseteq V$ of size 
$$
|Z|=z=\frac {n}{d^i} \log (d^i) \left(1 - \frac {1}{\omega^{1/3}} \right),
$$ 
where $i=i(n) \in \N$ is the largest integer such that $d^i = o(n)$. Note that, by the definition of $i$, 
$$
\left( \frac 12 + o(1) \right) \frac {n \log n}{d^i} \le z \le \frac {n \log n}{d^i}.
$$
Since $zd^{i-1}/n = \bigo( \log n / d ) = \bigo( \omega^{-1} )$, we may expose $N_{\le i-1}(Z)$ and based on part~(a) we may assume that 
$$
|N_{\le i-1}(Z)| = (1+\bigo(\omega^{-1/2})) \, zd^{i-1}.
$$ 
There are $(1+\bigo( \omega^{-1} )) \, n$ vertices outside of $N_{\le i-1}(Z)$. The probability that a given one of them is not adjacent to any vertex of $N_{\le i-1}(Z)$ is $(1-p)^{(1+\bigo(\omega^{-1/2})) \, zd^{i-1}}$ and so the probability that all of them belong to $N_{\le i}(Z)$ (that is, $N_{i+1}(Z) = \emptyset$) is equal to
\begin{eqnarray*}
q &=& \left( 1 - (1-p)^{(1+\bigo(\omega^{-1/2})) \, zd^{i-1}} \right)^{(1+\bigo( \omega^{-1} )) \, n} \\
&=& \left( 1 - \exp \left( - \frac {(1+\bigo(\omega^{-1/2})) \, zd^{i}}{n} \right) \right)^{(1+\bigo( \omega^{-1} )) \, n} \\
&=& \left( 1 - \exp \left( - \left( 1 - \frac {1+o(1)}{\omega^{1/3}} \right) \log(d^i) \right) \right)^{(1+\bigo( \omega^{-1} )) \, n}.
\end{eqnarray*}
Since $\log(d^i) = \Theta(\log n)$ and $\omega \le \log n$, $(1+o(1)) \log(d^i) / \omega^{1/3} \ge 3 \log \log n$ and so
\begin{eqnarray*}
q &\le& \left( 1 - \exp \left( - \log(d^i)  + 3 \log \log n \right) \right)^{(1+\bigo( \omega^{-1} )) \, n} \\
&=& \left( 1 - \frac {\log^3 n}{d^i} \right)^{(1+\bigo( \omega^{-1} )) \, n} \\
&=& \exp \left( - (1+o(1)) \, \frac {n \log^3 n}{d^i} \right).
\end{eqnarray*}
Similarly, the probability that all but exactly one vertex outside of $N_{\le i-1}(Z)$ belong to $N_{\le i}(Z)$ is equal to
\begin{align*}
(1+o(1)) n \, (1-p)^{(1+\bigo(\omega^{-1/2})) \, zd^{i-1}} & \left( 1 - (1-p)^{(1+\bigo(\omega^{-1/2})) \, zd^{i-1}} \right)^{(1+\bigo( \omega^{-1} )) \, n} \\
& \le nq \le \exp \left( - (1+o(1)) \, \frac {n \log^3 n}{d^i} \right).
\end{align*}
On the other hand, the number of sets of size $z$ is, trivially, at most 
$$
\binom{n}{z} \le n^z = \exp \left( z \log n \right) \le \exp \left( \frac {n \log^2 n}{d^i} \right) = o( nq ). 
$$
We get that the expected number of sets $Z \subseteq V$ with $|Z|=z$ with the property that at most one vertex is at distance at least $i+1$ from $Z$ is equal to $o(1)$ and so the desired property holds a.a.s.\ by the first moment method. This finishes part~(b) and the proof of the lemma. 
\end{proof}

Now, we are ready to prove a lower bound for $\pqn{G}$.

\begin{theorem}\label{thm:lower_bound}
Suppose that $\omega \log n \le d=pn \le n / \omega = o(n)$, where $\omega=\omega(n)$ is any function tending to infinity as $n \to \infty$ such that $\omega \le \log n$.  
Let $i=i(n) \in \N$ be the largest integer such that $d^i = o(n)$. 
Suppose that $d^{i+1} / n - 2 \log n \to \infty$.

Then, the following property holds a.a.s.\ for $G = (V,E) \in \Gnp$:
$$
\eqn{G} \ge \pqn{G} \ge k := \frac {n}{2 d^i} \log (d^i) \left(1 - \frac {1}{\omega^{1/3}} \right) = \Omega \left( \frac {n \log n}{d^i} \right).
$$
\end{theorem}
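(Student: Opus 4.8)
The plan is to prove the lower bound $\pqn{G}\ge k$ by playing the adversary's side of the reformulated game from Subsection~\ref{sec:reformulation}; the first inequality $\eqn{G}\ge\pqn{G}$ is free from Observation~\ref{obs:lower}. Recall that the algorithm needs strictly more than $t$ queries exactly when the adversary can keep $|V_t|\ge 2$. So I would exhibit a strategy guaranteeing that after any $t<k$ pair queries at least two potential targets survive. The entire argument will be deterministic on a single good event, namely that $G$ has diameter exactly $i+1$ and that the conclusion of Lemma~\ref{lem:gnp exp}(b) holds; since each of these holds a.a.s., so does their conjunction, and the bound follows.

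First I would pin down the diameter. By definition $i$ is the largest integer with $d^i=o(n)$, so $d^i/n-2\log n\to-\infty$, while the hypothesis gives $d^{i+1}/n-2\log n\to\infty$; as $d\ge\omega\log n\gg\log n$, Lemma~\ref{lem:diameter} yields that the diameter of $G$ equals $i+1$ a.a.s., and I condition on this henceforth. Writing $Q_t$ for the (at most $2t$) distinct vertices appearing in the first $t$ queries, the central observation is the following survival invariant: every $x\in V$ at distance at least $i+1$ from all of $Q_t$ satisfies $x\in V_t$, that is, $V_t\supseteq V\setminus N_{\le i}(Q_t)$.

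To justify the invariant, fix $x\notin N_{\le i}(Q_t)$ and argue by induction that $x\in V_s$ for all $s\le t$; the base case $x\in V_0=V$ is clear. For the inductive step, consider round $s$ with query $(u_s,v_s)$. Since $x$ is at distance at least $i+1$ from both $u_s$ and $v_s$ and the diameter equals $i+1$, we have $\dist{u_s}{x}=\dist{v_s}{x}=i+1$, hence $x\in\Cl(u_s,v_s)\cap\Cl(v_s,u_s)$. Combined with $x\in V_{s-1}$ this gives $x\in V_s^=$, and because $V_s^=\subseteq V_s$ irrespective of the adversary's reply, $x\in V_s$. Thus $x\in V_t$, establishing the invariant.

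Finally I would bound $|V_t|$ from below for $t<k$. Set $z=\tfrac{n}{d^i}\log(d^i)(1-\omega^{-1/3})=2k$, which is the size appearing in Lemma~\ref{lem:gnp exp}(b); note $z=o(n)\le n$ since $d^i\gg\log n$. For $t<k$ we have $|Q_t|\le 2t\le 2(k-1)<z$, so I may choose $Z\supseteq Q_t$ with $|Z|=z$, whence $N_{\le i}(Q_t)\subseteq N_{\le i}(Z)$ and Lemma~\ref{lem:gnp exp}(b) gives $|N_{\le i}(Z)|\le n-2$. Together with the invariant, $|V_t|\ge|V\setminus N_{\le i}(Q_t)|\ge|V\setminus N_{\le i}(Z)|\ge 2$, so the game cannot terminate within $k-1$ queries and $\pqn{G}\ge k$. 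The only genuinely delicate point is that this survival invariant hinges on the diameter being \emph{exactly} $i+1$: were some queried vertex farther than $i+1$ from $x$, the two distances could differ and $x$ might be resolved, so controlling the diameter (this is the role of the hypothesis $d^{i+1}/n-2\log n\to\infty$) is what makes the reduction go through, while all the probabilistic heavy lifting is already packaged in Lemma~\ref{lem:gnp exp}(b).
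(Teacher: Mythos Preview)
Your proof is correct and follows essentially the same approach as the paper: condition on the diameter being exactly $i+1$ via Lemma~\ref{lem:diameter}, observe that any vertex outside $N_{\le i}$ of the queried vertices lies in $V_t^=$ at every round and hence survives regardless of the adversary's replies, and then invoke Lemma~\ref{lem:gnp exp}(b) (after padding the query set to size $z=2k$) to guarantee at least two such survivors. The only cosmetic difference is that you run the argument for $t<k$ to conclude $\pqn{G}\ge k$, whereas the paper runs it at $t=k$ to get the marginally stronger $|V_k|\ge 2$; your explicit inductive framing of the survival invariant and your remark on why the exact diameter matters are welcome clarifications but do not change the underlying argument.
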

\begin{proof}
Note that we assumed that $d^{i+1} / n - 2 \log n \to \infty$ and by definition of $i$ we get that $d^{i} / n - 2 \log n \to -\infty$. Hence, by Lemma~\ref{lem:diameter} we get that the diameter of $\Gnp$ is equal to $i+1$ a.a.s.

\medskip

Typically, in order to establish lower bounds for the length of the game, one needs to show a strategy for the second player (the adversary) and prove that this strategy guarantees that the game is not over after $k$ rounds. However, our ``board'' (random graph $\Gnp$) has nice expansion properties that guarantee that the game cannot finish earlier \emph{regardless of} what the adversary does.

Suppose then that the first player (the algorithm) presents a sequence of pairs of vertices $(u_t,v_t)_{t \in [k]}$ during the first $k$ rounds of the game. Let 
$$
Z = \{ u_t : t \in [k] \} \cup \{ v_t : t \in [k] \}
$$
be the set of all vertices presented during this part of the game. Clearly, $|Z| \le 2k$. We may add some additional vertices to $Z$, if needed, so that $|Z| = 2k$. By Lemma~\ref{lem:gnp exp}(b), we may assume that there are at least two vertices (say, $x$ and $y$) that are at distance at least $i+1$ from $Z$. Since the diameter of $G$ is equal to $i+1$, these two vertices are at distance exactly $i+1$ from any vertex in $Z$ and so, in particular, from any vertex that was presented up to this point of the game. We get that $\{x,y\} \subseteq V^=_t$ for all $t \in [k]$, which implies that $x$ and $y$ are compatible with all replies so far. As a result, regardless of what the adversary does, $\{x,y\} \subseteq V_k$ and so the game is not over yet. This finishes the proof of the theorem. 
\end{proof}

\subsection{Upper Bounds}

As usual, let us start with some useful expansion properties of random graphs. 

\begin{lemma}\label{lem:gnp exp2}
Suppose that $\omega \log n \le d=pn \le n / \omega = o(n)$, where $\omega=\omega(n)$ is any function tending to infinity as $n \to \infty$ such that $\omega \le \log n$.  Let $i=i(n) \in \N$ be the largest integer such that $d^i = o(n)$. Finally, let $\om = \om(n) = \min( \sqrt{\omega}, n/d^i ) \to \infty$ as $n \to \infty$.

The following properties hold a.a.s.\ for $G = (V,E) \in \Gnp$. 
For any vertices $x,y \in V$ ($x \neq y$) we partition the set of vertices $V$ into the following 4 parts:
\begin{eqnarray*}
X &=& N_i(x) \setminus N_{\le i}(y) \\
Y &=& N_i(y) \setminus N_{\le i}(x) \\
Z &=& V \setminus N_{\le i}(\{x,y\}) \\
R &=& V \setminus (X \cup Y \cup Z) = (N_i(x) \cap N_i(y)) \cup N_{\le i-1}(\{x,y\}).  
\end{eqnarray*}
Then, the following holds:
\begin{itemize}
\item [(a)] $|X| = \left(1+ \bigo \left( \om^{-1} \right) \right) d^i \sim d^i$.
\item [(b)] $|Y| = \left(1+ \bigo \left( \om^{-1} \right) \right) d^i \sim d^i$.
\item [(c)] $|Z| = n - \left(2+ \bigo \left( \om^{-1} \right) \right) d^i \sim n$.
\item [(d)] $|R| = \bigo \left( \om^{-1} \right) d^i = o(d^i)$.
\end{itemize}
Moreover, the following holds, provided that $d^{i+1} / n \gg \om \log n$:
\begin{itemize}
\item [(e)] Each vertex in $X \cup Y \cup Z$ has $\bigo \left( \om^{-1} \right) d^{i+1} / n$ neighbours in $R$.
\item [(f)] Each vertex in $X \cup Z$ has $\left(1+ \bigo \left( \om^{-1/2} \right) \right) d^{i+1} / n \sim d^{i+1}/n$ neighbours in $Y$.
\item [(g)] Each vertex in $Y \cup Z$ has $\left(1+ \bigo \left( \om^{-1/2} \right) \right) d^{i+1} / n \sim d^{i+1}/n$ neighbours in $X$.
\end{itemize}
\end{lemma}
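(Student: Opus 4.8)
The plan is to read off the size estimates (a)--(d) directly from the expansion bounds of Lemma~\ref{lem:gnp exp}(a), and then to establish the degree estimates (e)--(g) by a two-source exploration that exposes only the edges needed to reveal the partition, leaving the cross-edges we must count \emph{fresh} for a Chernoff bound.

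First I would handle the sizes. Since Lemma~\ref{lem:gnp exp}(a) holds a.a.s.\ simultaneously for all sets of bounded size, I may assume that, uniformly over all $x\neq y$, we have $|N_{\le i}(x)| = (1+\bigo(\om^{-1}))\,d^i$ and $|N_i(x)| = (1+\bigo(\om^{-1}))\,d^i$ (and likewise for $y$), together with $|N_{\le i}(\{x,y\})| = (1+\bigo(\om^{-1}))\,2d^i$; here the error $\bigo(1/\sqrt{\omega})+\bigo(zd^i/n)$ for $z\le 2$ collapses to $\bigo(\om^{-1})$ precisely because $\om=\min(\sqrt{\omega},n/d^i)$. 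Inclusion--exclusion then gives $|N_{\le i}(x)\cap N_{\le i}(y)| = |N_{\le i}(x)|+|N_{\le i}(y)|-|N_{\le i}(\{x,y\})| = \bigo(\om^{-1})\,d^i$, the leading $d^i$ terms cancelling. As $X=N_i(x)\setminus N_{\le i}(y)$ differs from $N_i(x)$ only by a subset of this intersection, (a) follows, (b) is symmetric, and (c) is immediate from $|N_{\le i}(\{x,y\})|$ together with $d^i=o(n)$. For (d) I use $R\subseteq (N_i(x)\cap N_i(y))\cup N_{\le i-1}(x)\cup N_{\le i-1}(y)$, bounding the first piece by the intersection above and the last two by $\bigo(d^{i-1})=o(\om^{-1}d^i)$, which is valid since $\om\le\sqrt{\log n}=o(d)$.

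For (e)--(g) I would run breadth-first search from both $x$ and $y$ up to level $i$. This exposes exactly the edges incident to $N_{\le i-1}(x)\cup N_{\le i-1}(y)$, which already determines $N_{\le i}(x)$, $N_{\le i}(y)$, and hence the whole partition. The crucial point is that any edge joining $X\cup Z$ to $Y$ has, in each source, one endpoint at distance exactly $i$ and the other at distance at least $i+1$, so it is not incident to $N_{\le i-1}(x)\cup N_{\le i-1}(y)$ and is therefore unexposed; the same holds for edges between $X\cup Y\cup Z$ and $N_i(x)\cap N_i(y)$. Conditioning on the exploration (hence on the partition and the sizes from (a)--(d)), these cross-edges are thus fresh i.i.d.\ Bernoulli$(p)$, so the relevant neighbour counts are genuine binomials of mean $\sim d^{i+1}/n$. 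For (f), $|N(v)\cap Y|\sim\textrm{Bin}(|Y|,p)$ with mean $(1+\bigo(\om^{-1}))\,d^{i+1}/n$, and \eqref{chern} applied with $\eps=\Theta(\om^{-1/2})$ gives deviation $\bigo(\om^{-1/2})\,d^{i+1}/n$ with failure probability $\exp(-\Theta(\om^{-1}d^{i+1}/n))=\exp(-\omega(\log n))$, exactly because the hypothesis $d^{i+1}/n\gg\om\log n$ makes $\om^{-1}d^{i+1}/n\gg\log n$; a union bound over the at most $n^3$ triples $(x,y,v)$ finishes (f), and (g) is symmetric. For (e) I split $R$: neighbours in $N_i(x)\cap N_i(y)$ are counted by fresh edges of mean $\bigo(\om^{-1})\,d^{i+1}/n$ and are bounded above by the upper-tail estimate \eqref{chern2} with threshold $\Theta(\om^{-1}d^{i+1}/n)\gg\log n$, while neighbours in $N_{\le i-1}(x)\cup N_{\le i-1}(y)$ can only be ``parents'' (a vertex of $Y\cup Z$ has no neighbour in $N_{\le i-1}(x)$ and one of $X\cup Z$ none in $N_{\le i-1}(y)$), and $|N(v)\cap N_{i-1}(x)|$ is again a fresh binomial of tiny mean $\bigo(d^i/n)$ after exposing only $N_{\le i-1}(x)$, so \eqref{chern2} keeps it below $\om^{-1}d^{i+1}/n$.

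The main obstacle is the careful bookkeeping of the exploration: one must verify that revealing $N_{\le i}(x)$ and $N_{\le i}(y)$ requires exposing only edges incident to $N_{\le i-1}(x)\cup N_{\le i-1}(y)$, and that every edge whose multiplicity we estimate---those from $X\cup Z$ to $Y$, those from $X\cup Y\cup Z$ to $N_i(x)\cap N_i(y)$, and the parent edges in the $N_{\le i-1}$ part of $R$---is left untouched, so that conditioning on the partition does not bias them. Once this independence is in place, the remainder is routine Chernoff tail bounds and a union bound over $\bigo(n^3)$ tuples, and the numerology closes up precisely because $d^{i+1}/n\gg\om\log n$ is exactly the strength needed to drive each tail below every polynomial in $n$.
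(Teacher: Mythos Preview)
Your proposal is correct and follows essentially the same route as the paper: parts~(a)--(d) are read off from Lemma~\ref{lem:gnp exp}(a) via inclusion--exclusion, and parts~(e)--(g) are handled by a two-stage BFS exposure (first to level $i-1$, then to level $i$) that leaves the relevant cross-edges fresh for Chernoff plus a union bound over $\bigo(n^3)$ triples. The only cosmetic difference is that the paper, for part~(e), first bounds $|N(v)\cap N_{\le i-1}(\{x,y\})|$ for \emph{all} $v\notin N_{\le i-1}(\{x,y\})$ before exposing the $i$th level and identifying the partition, whereas you describe the partition first and then appeal to a less-exposed state for the parent count; your closing paragraph correctly identifies this bookkeeping as the point requiring care, and reordering your exposure to match the paper's removes any ambiguity.
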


\begin{proof}
Parts~(a)--(d) follow immediately and deterministically from Lemma~\ref{lem:gnp exp}(a). Since we aim for a statement that holds a.a.s., we may assume that the property stated in Lemma~\ref{lem:gnp exp}(a) holds for any set $Z \subseteq V$ with $|Z| \le 2$:
\begin{eqnarray}
\left| N_i(Z) \right| &=& \left(1+ \bigo \left( \frac {1}{\sqrt{\omega}}\right) + \bigo \left( \frac{d^i}{n} \right) \right) d^i |Z| \sim d^i |Z|,\label{eq:ball_of_Z} \\
\left| N_{\le i}(Z) \right| &=& \left(1+ \bigo \left( \frac {1}{\sqrt{\omega}}\right) + \bigo \left( \frac{d^i}{n} \right) \right) d^i |Z| \sim d^i |Z|.\label{eq:sphere_of_Z}
\end{eqnarray}
Now, for a given $x,y \in V$ ($x \neq y$) we may apply~\eqref{eq:ball_of_Z}--\eqref{eq:sphere_of_Z} with $Z=\{x,y\}$, $Z=\{x\}$, and $Z=\{y\}$ to get the desired properties. This finishes the proof of parts~(a)--(d).

\medskip

In order to prove part~(e), let us concentrate on any pair of vertices $x,y \in V$ ($x \neq y$). We first expose edges up to the $(i-1)$th neighbourhood of $\{x,y\}$. Since we aim for a statement that holds a.a.s., by Lemma~\ref{lem:gnp exp}(a) we may assume that $|N_{\le i-1}(\{x,y\})| \sim 2d^{i-1} = \bigo \left( \om^{-1} \right) d^i$. For any vertex $v \notin N_{\le i-1}(\{x,y\})$, let $Q$ be the number of neighbours of $v$ in $N_{\le i-1}(\{x,y\})$. Clearly, $Q$ is a binomial random variable with expectation equal  to $|N_{\le i-1}(\{x,y\})| d/n = \bigo \left( \om^{-1} \right) d^{i+1}/n$. It follows from Chernoff's bound~\eqref{chern2} that
$$
\Prob( Q \ge \E [Q] + \om^{-1} d^{i+1}/n ) \le \exp \left( - \Omega \left( \om^{-1} d^{i+1}/n \right) \right) = o(1/n^3),
$$
since it is assumed that $d^{i+1} / n \gg \om \log n$. By the union bound over $x,y$, and $v$, we may assume that all vertices outside of $N_{\le i-1}(\{x,y\})$ have $\bigo \left( \om^{-1} \right) d^{i+1}/n$ neighbours in $N_{\le i-1}(\{x,y\})$.

Let us now expose edges up to the $i$th neighbourhood of $\{x,y\}$, which identifies sets $X$, $Y$, $Z$, and $R$.
\begin{figure}[htb]
	\begin{center}
		\includegraphics[scale=0.85]{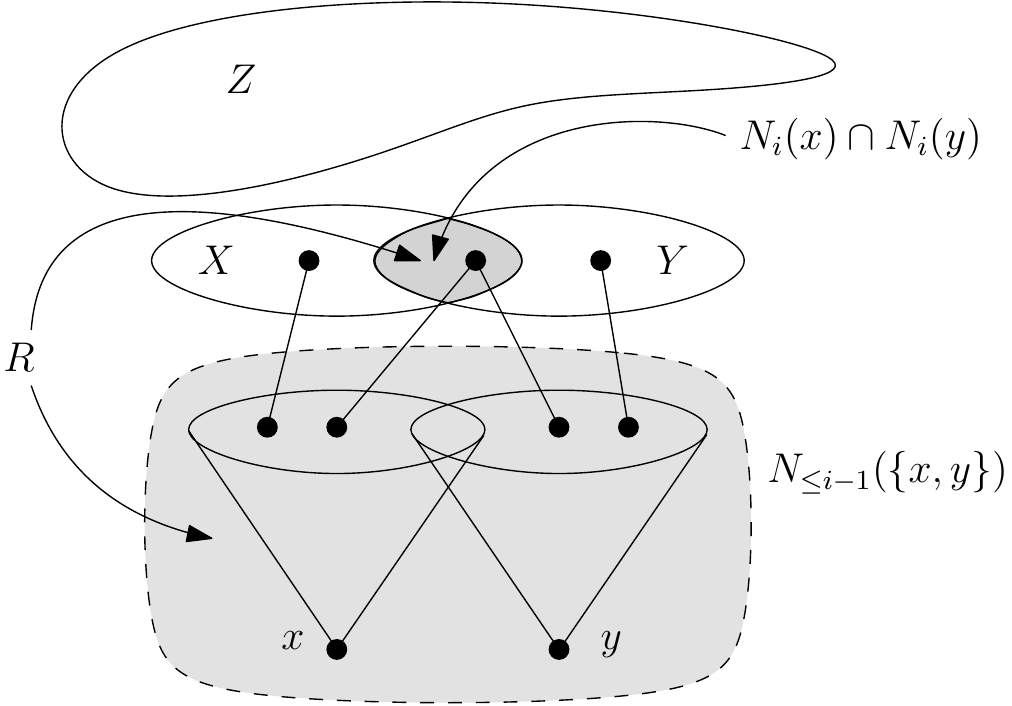}
	\end{center}
	\caption{An illustration of the partition of $V$: $X,Y,Z$, and $R = (N_i(x) \cap N_i(y)) \cup N_{\le i-1}(\{x,y\})$ (grey area).}
	\label{fig:xyz}
\end{figure}
Since we aim for a statement that holds a.a.s., we may assume that properties (a)--(d) hold. More importantly, note that edges within $V \setminus N_{\le i-1}(\{x,y\}) = X \cup Y \cup Z \cup (N_i(x) \cap N_i(y))$ are not exposed yet---see Figure~\ref{fig:xyz} for an illustration. For any $v \in X \cup Y \cup Z$, the expected number of neighbours of $v$ in $N_i(x) \cap N_i(y)$ is at most $|R| d/n = \bigo \left( \om^{-1} \right) d^{i+1}/n$. After applying Chernoff's bound~\eqref{chern2} again we get that a.a.s., for all $x,y$, all vertices in $X \cup Y \cup Z$ have $\bigo \left( \om^{-1} \right) d^{i+1} / n$ neighbours in $N_i(x) \cap N_i(y)$. This, together with the previous observation, implies the same bound for the number of neighbours in $R = (N_i(x) \cap N_i(y)) \cup N_{\le i-1}(\{x,y\})$, which concludes the proof of part~(e).

\medskip

To prove part~(f), let us expose edges up to the $i$th neighbourhood of $\{x,y\}$ and fix any $v \in X \cup Z$. Let $Q$ be the number of neighbours of $v$ in $Y$. Clearly, $\E [Q] = |Y| d/n = \left(1+ \bigo \left( \om^{-1} \right) \right) d^{i+1} / n \gg \om \log n$. The conclusion follows from Chernoff's bound~\eqref{chern} (applied with $\eps = \om^{-1/2}$), since
$$
\Prob \Big( |Q-\E [Q]| \ge \eps \ \E [Q] \Big) \le 2\exp \left( - \Omega( \eps^2 \E[Q] ) \right) = o(1/n^3).
$$ 
The proof of part~(g) is exactly the same. This finishes the proof of the lemma. 
\end{proof}

Let us start with a proof of an upper bound for $\pqn{G}$. Of course, since $\pqn{G} \le \eqn{G}$, any upper bound for $\eqn{G}$ implies the same bound for $\pqn{G}$. There are two reasons why we present them independently. First of all, the corresponding strategies for the first player (the algorithm) are slightly different but the proofs that they are winning strategies are almost identical. We present details for a slightly easier proof for $\pqn{G}$ and then we will discuss how to adjust it to deal with $\eqn{G}$. Moreover, the second proof requires slightly stronger assumptions for an asymptotic value of $d^{i+1} / n$. 

\begin{theorem}\label{thm:upper_bound_pqn}
Suppose that $\omega \log n \le d=pn \le n / \omega = o(n)$, where $\omega=\omega(n)$ is any function tending to infinity as $n \to \infty$ such that $\omega \le \log n$.  
Let $i=i(n) \in \N$ be the largest integer such that $d^i = o(n)$. 
Suppose that $d^{i+1} / n - 2 \log n \to \infty$.

Then, the following property holds a.a.s.\ for $G = (V,E) \in \Gnp$:
$$
\pqn{G} \le k := (2+o(1)) \frac {n \log n}{d^i} = \bigo \left( \frac {n \log n}{d^i} \right).
$$
\end{theorem}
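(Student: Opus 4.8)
The plan is to exhibit a strategy for the algorithm (the first player) that shrinks the set $V_t$ of potential targets by a multiplicative factor of roughly $1-d^i/n$ in every round, for as long as $V_t$ is not too small, and then to finish with the trivial one-vertex-per-round strategy of Observation~\ref{obs:lower}. First, exactly as in the proof of Theorem~\ref{thm:lower_bound}, the hypotheses together with Lemma~\ref{lem:diameter} give that a.a.s.\ the diameter of $G$ equals $i+1$; hence for any two vertices $x,y$ every other vertex lies within distance $i+1$ of both, and the partition $V = X\cup Y\cup Z\cup R$ of Lemma~\ref{lem:gnp exp2} is available simultaneously for all pairs $x,y$, with $|X|,|Y|\sim d^i$, $|Z|\sim n$, and $|R| = o(d^i)$. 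I will run the strategy on the (a.a.s.) event that all these expansion estimates hold, after which everything is deterministic.

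The key observation is that, from the point of view of the current set $S := V_{t-1}$, a single pair query $(x,y)$ is an instruction to discard either $S\cap X$ or $S\cap Y$: indeed $X\subseteq \Cl(x,y)\setminus\Cl(y,x)$ and $Y\subseteq\Cl(y,x)\setminus\Cl(x,y)$ (the vertices of $X$ are strictly closer to $x$, those of $Y$ strictly closer to $y$), while all of $Z$ is equidistant from $x$ and $y$ and therefore survives either reply. Consequently the adversary's reply eliminates at least $\min(|S\cap X|,|S\cap Y|)$ vertices of $S$, and the algorithm should choose $x,y$ so as to make this minimum large. I would do this by averaging: since a.a.s.\ $|N_i(v)| = (1+o(1))d^i$ for every $v$, we have $\sum_{x\in V}|N_i(x)\cap S| = \sum_{v\in S}|N_i(v)| = (1+o(1))|S|\,d^i$, so some vertex $x$ has $|N_i(x)\cap S| \ge (1-o(1))|S|d^i/n$, and a similar count produces a second vertex $y$ with a comparably large intersection. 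Because distance-$i$ spheres are almost disjoint (all pairwise overlaps lie in $R$, of size $o(d^i)$), passing from $N_i(x)$ to $X = N_i(x)\setminus N_{\le i}(y)$ and from $N_i(y)$ to $Y$ costs only lower-order terms, so that $\min(|S\cap X|,|S\cap Y|)$ is of order $|S|d^i/n$ and, whatever the adversary does, $|V_t| \le |V_{t-1}|\bigl(1 - (1-o(1))\,c\,d^i/n\bigr)$ for a suitable constant $c$.

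Iterating this bound, $V_t$ shrinks from $n$ to $\bigo(n/d^i)$ in $\bigo\bigl(\tfrac{n}{d^i}\log(d^i)\bigr) = \bigo\bigl(\tfrac{n\log n}{d^i}\bigr)$ rounds, and tracking the constants in the per-round factor yields the stated coefficient $2+o(1)$. Once $|V_t| = \bigo(n/d^i)$, multiplicative progress is no longer meaningful (the expected number eliminated drops below one), so I would switch to the strategy of Observation~\ref{obs:lower}, which removes at least one vertex per round and hence terminates in a further $\bigo(n/d^i) = o(n\log n/d^i)$ rounds. Summing the two phases gives $\pqn{G} \le (2+o(1))\,n\log n/d^i$.

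The main obstacle is making the per-round elimination bound robust for \emph{small} $S$. When $|S| = \Omega(n)$ the corrections coming from $|R| = o(d^i)$ are genuinely negligible next to the target quantity $|S|d^i/n$, so the crude estimate $|S\cap X| \ge |N_i(x)\cap S| - |R|$ suffices. However, once $|S|$ falls below about $n/\om$ these worst-case overlap terms become comparable to $|S|d^i/n$, and bounding them by $|R|$ is no longer enough. I therefore expect the delicate part of the argument to be showing that for every $S$ throughout the whole range down to $|S| = \Theta(n/d^i)$ one can still select the pair $(x,y)$ so that both $|S\cap X|$ and $|S\cap Y|$ are of order $|S|d^i/n$; this requires controlling the intersections $|N_i(x)\cap N_i(y)\cap S|$ and $|N_{\le i-1}(\{x,y\})\cap S|$ for the \emph{chosen} pair rather than for the worst pair, which means combining the averaging argument with the near-disjointness of spheres more carefully than the bound $|R|=o(d^i)$ alone permits.
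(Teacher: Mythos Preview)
Your approach is genuinely different from the paper's, and the obstacle you identify at the end is real and is exactly where your argument is incomplete. The paper does not attempt to track $V_t$ or to choose a good query adaptively depending on the current search space. Instead it fixes in advance a \emph{randomized} query sequence and argues, for each fixed pair $x,y$, that with probability at least $1-n^{-2}$ the sequence forces the adversary to eliminate one of $x,y$; a union bound over all $\binom{n}{2}$ pairs then shows the random strategy succeeds with probability at least $1/2$, so by the probabilistic method some deterministic strategy of the same length works. Concretely, the random strategy consists of $a\log n$ independent phases of length $bn/d^i$ each; in a phase the algorithm draws a fresh random sequence of vertices and in every round plays the adversary's previous reply against the next vertex of the sequence. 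For fixed $x,y$, a phase is ``good'' if it avoids $R$, contains at least one vertex of $X$ and one of $Y$, and starts in $Z$; one checks directly that a good phase must separate $x$ from $y$, and that each phase is good with probability at least $1-2e^{-b}-o(1)$. Tuning $a,b$ gives the constant $2+o(1)$.

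The point is that this pairwise analysis never sees a ``small $S$'': the only sets that enter are $X$, $Y$, $Z$, $R$ for the fixed pair, whose sizes are controlled uniformly by Lemma~\ref{lem:gnp exp2}. By contrast, your greedy scheme needs the inequality $\min(|S\cap X|,|S\cap Y|) \ge (1-o(1))|S|d^i/n$ to persist all the way down to $|S|=\Theta(n/d^i)$, and the crude subtraction $|S\cap X|\ge |S\cap N_i(x)|-|R|$ loses $|R|=o(d^i)$, which swamps $|S|d^i/n$ once $|S|\ll n/\om$. A second averaging over $y$ (bounding $\sum_y |N_{\le i}(y)\cap T|$ with $T=N_i(x)\cap S$) might close this, but you have not carried it out, and in any case your claim that ``tracking the constants'' yields $2+o(1)$ is unsupported: if the double averaging really gave per-round elimination $\sim |S|d^i/n$, you would in fact obtain $(1+o(1))n\log n/d^i$, not $2+o(1)$; any honest accounting of the losses would have to be done explicitly. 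So as written there is a genuine gap, and the paper's randomized-strategy-plus-union-bound argument is both simpler and sidesteps it entirely.
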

\begin{proof}
Note that we assumed that $d^{i+1} / n - 2 \log n \to \infty$ and by definition of $i$ we get that $d^{i} / n - 2 \log n \to -\infty$. Hence, by Lemma~\ref{lem:diameter} we get that the diameter of $\Gnp$ is equal to $i+1$ a.a.s.

\medskip

Let $\om = \om(n) = \min( \sqrt{\omega}, n/d^i )$. Clearly, $\om \to \infty$ as $n \to \infty$. We say that a graph $G=(V,E)$ is \emph{nice} if for any $x,y \in V$ ($x \neq y$) we have
\begin{eqnarray*}
\left| N_i(x) \setminus N_{\le i}(y) \right| &=& \left(1+ \bigo \left( \om^{-1} \right) \right) d^i \sim d^i, \\
\left| N_{\le i}(x) \cap N_{\le i}(y) \right| &=& \bigo \left( \om^{-1} \right)  d^i = o(d^i).
\end{eqnarray*}
We will show that for any nice graph $G$, $\pqn{G} \le k$ (deterministically). This will finish the proof since, by Lemma~\ref{lem:gnp exp2}(a), a.a.s.\ $G \in \Gnp$ is nice.

Let us then move away from random graphs and concentrate on any (deterministic) nice graph $G=(V,E)$ on $n=|V|$ vertices. The first player (the algorithm) will use a randomized strategy to play the game for the first $k$ rounds. We will show that this strategy is a winning one with probability at least $1/2$. By the trivial probabilistic method, it will imply that there exists a winning strategy and the bound $\pqn{G} \le k$ will be established. To that end, it is enough to show that for any pair of vertices $x$ and $y$, regardless of what the second player (the adversary) does, the randomized strategy eliminates at least one of them from being the target with probability at least $1-n^{-2}$. Formally, we need to show that
\begin{equation}\label{eq:distinguish_pair}
\Prob \left( \{x,y\} \subseteq V_k \right) \le n^{-2} \qquad \text{for any $x,y \in V$}.
\end{equation}
Indeed, if~\eqref{eq:distinguish_pair} is established, then by the union bound
$$
\Prob(\text{the strategy fails}) = \Prob \left( |V_k| \ge 2 \right) = \Prob \left( \exists x,y \in V : \{x,y\} \subseteq V_k \right) \le \binom{n}{2} n^{-2} \le 1/2.
$$

\medskip

Let us now describe the randomized strategy the algorithm is going to use. Fix $b > 1$, an arbitrarily large but fixed real number. The strategy consists of $a \log n$ independent \emph{phases}, where $a = 2/(b-1)$. Each phase will last for $b n / d^i$ rounds so the total number of rounds is equal to 
$$
k = (a \log n) \, \frac {b n}{d^i} = \frac {2b}{b-1} \, \frac {n \log n}{d^i}.
$$
Since $2b/(b-1) \to 2$ as $b\to \infty$, we will get the desired upper bound. 

At the beginning of each phase, the algorithm generates a sequence of $bn/d^i + 1$ random vertices, each of them selected independently and uniformly at random from $V$, all available vertices. During the first round, the algorithm presents the first two vertices from the sequence. Then, in each consecutive round, the algorithm presents the vertex the adversary replied with in the previous round together with the next vertex from the random sequence. 

\medskip

It remains to show that~\eqref{eq:distinguish_pair} holds. Let us fix any $x,y \in V$. We partition the set of vertices $V$ into the following 4 parts:
\begin{eqnarray*}
X &=& N_i(x) \setminus N_{\le i}(y) \\
Y &=& N_i(y) \setminus N_{\le i}(x) \\
Z &=& V \setminus N_{\le i}(\{x,y\}) \\
R &=& V \setminus (X \cup Y \cup Z) = (N_i(x) \cap N_i(y)) \cup N_{\le i-1}(\{x,y\}).  
\end{eqnarray*}
Since the diameter of $G$ is $i+1$, we get that
\begin{itemize}
\item [(i)] vertices in $X$ are at distance $i$ from $x$ and at distance $i+1$ from $y$, 
\item [(ii)] vertices in $Y$ are at distance $i+1$ from $x$ and at distance $i$ from $y$, 
\item [(iii)] vertices in $Z$ are at distance $i+1$ from both $x$ and $y$, 
\item [(iv)] vertices in $R$ are at distance $i$ from both $x$ and $y$ or less than $i$ for at least one of $x$ of $y$.
\end{itemize}
Since $G$ is nice, we have that both $X$ and $Y$ consist of $\left(1+ \bigo \left( \om^{-1} \right) \right) d^i \sim d^i$ vertices and $|R| = \bigo \left( \om^{-1} \right) d^i = o(d^i)$. As a result, $|Z| = n - (2+o(1)) d^i \sim n$.

A given phase is \emph{good} if the following three properties hold for the random sequence of $bn/d^i + 1$ vertices the algorithm generates at the beginning of that phase:
\begin{itemize}
\item [(1)] the sequence consists of only vertices from $X \cup Y \cup Z$,
\item [(2)] the sequence has at least one vertex from $X$ and at least one vertex from $Y$,
\item [(3)] the first vertex in the sequence is from $Z$.
\end{itemize}
First, let us note that if a phase is good, then the strategy guarantees that at least one of $x$ and $y$ is eliminated during that phase. Indeed, the properties (1)--(3) of being good, and the strategy used, imply that at some round during this phase, $(u_t, v_t)$ is presented with $u_t \in Z$ and $v_t \in X \cup Y$. By symmetry, without loss of generality, we may assume that $v_t \in X$. If the adversary replies with $u_t$, then $x$ gets eliminated: $x$ is closer to $v_t$ than to $u_t$. So assume that the adversary replies with $v_t$ which does not eliminate any of the two vertices: $x$ is closer to $v_t$ than to $u_t$ and $y$ is at the same distance from both of them. Vertex $v_t$ is kept by the algorithm and reused in the next round. During the following rounds, $v_t$ can be presented together with some other vertices from $Z$ but the only chance for $x$ not to be eliminated is that the adversary keeps replying with a vertex from $X$. (Trivially, a vertex from $X$ is transferred to the next round when two vertices from $X$ are presented.) However, eventually, a vertex from $X$ will be presented together with a vertex from $Y$ eliminating either $x$ or $y$.

Now, we will show that at least one phase must be good with probability at least $1-n^{-2}$. Let $\zeta$ be the random variable counting the number of vertices from $R$ that occur during a given phase. Clearly, 
$$
\E [\zeta] = \frac {|R|}{n} \left( \frac {bn}{d^i} + 1 \right) = \bigo \left( \frac {d^i}{n \om} \, \frac {bn}{d^i} \right) = \bigo ( \om^{-1} ) = o(1). 
$$
Hence, by the first moment method, the probability that a given phase does not satisfy property~(1) can be estimated as follows:
$$
\Prob ( \zeta \ge 1 ) \le \E [\zeta] = \bigo ( \om^{-1} ) = o(1).
$$
On the other hand, by the union bound, the probability that a given phase does not satisfy property~(2) is at most 
$$
\left( 1 - \frac {|X|}{n} \right)^{bn/d^i + 1} + \left( 1 - \frac {|Y|}{n} \right)^{bn/d^i + 1} \le 2 \exp \left( - \frac {d^i}{n} \, \frac {bn}{d^i} (1+o(1)) \right) = 2e^{-b} + o(1).
$$
Trivially, property~(3) is not satisfied with probability $o(1)$. Hence, all independent phases fail to be good with probability at most
\begin{eqnarray*}
\left( 2e^{-b} + o(1) \right)^{a \log n} &=& \exp \left( \Big( ( \log 2 - b ) a + o(1) \Big) \log n \right) \\
&=& \exp \left( -2 \Big( \frac {b-\log 2}{b-1} + o(1) \Big) \log n \right) \\
&\le& \exp \left( -2 \log n \right) = 1/n^{-2}.
\end{eqnarray*}
This finishes the proof of~\eqref{eq:distinguish_pair} and so also the entire proof of the theorem. 
\end{proof}

Now, let us adjust the proof of Theorem~\ref{thm:upper_bound_pqn} to claim the same upper bound for $\eqn{\Gnp}$.

\begin{theorem}\label{thm:upper_bound_eqn}
Suppose that $\omega \log n \le d=pn \le n / \omega = o(n)$, where $\omega=\omega(n)$ is any function tending to infinity as $n \to \infty$ such that $\omega \le \log n$.  
Let $i=i(n) \in \N$ be the largest integer such that $d^i = o(n)$. let $\om = \om(n) = \min( \sqrt{\omega}, n/d^i ) \to \infty$ as $n \to \infty$.
Finally, suppose that $d^{i+1} / n \gg \om \log n$.

Then, the following property holds a.a.s.\ for $G = (V,E) \in \Gnp$:
$$
\eqn{G} \le k := (2+o(1)) \frac {n \log n}{d^i} = \bigo \left( \frac {n \log n}{d^i} \right).
$$
\end{theorem}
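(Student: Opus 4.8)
The plan is to mirror the proof of Theorem~\ref{thm:upper_bound_pqn} as closely as possible, keeping the same skeleton. By Lemma~\ref{lem:diameter} the diameter of $\Gnp$ equals $i+1$ a.a.s., and I would call a graph \emph{nice} if it satisfies all of parts~(a)--(g) of Lemma~\ref{lem:gnp exp2} (the stronger hypothesis $d^{i+1}/n \gg \om \log n$ is assumed precisely so that parts~(e)--(g) are available; these were not needed for the pair-query bound). It then suffices to show that every nice graph $G$ of diameter $i+1$ satisfies $\eqn{G} \le k$ deterministically, since $G \in \Gnp$ is a.a.s.\ nice. As before I fix $b>1$, set $a = 2/(b-1)$, run $a \log n$ independent phases each lasting $bn/d^i$ rounds (so the round count is $k = \tfrac{2b}{b-1}\tfrac{n\log n}{d^i}$), and invoke the probabilistic method: it is enough to exhibit a randomized edge-query strategy for which every fixed pair $x \neq y$ satisfies $\Prob(\{x,y\} \subseteq V_k) \le n^{-2}$, as the union bound over the $\binom{n}{2}$ pairs then yields a deterministic winning strategy.

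The only genuine change is the strategy itself, which must respect the adjacency constraint. I would replace the sequence of i.i.d.\ uniform vertices by a \emph{random-neighbour walk}: each phase begins at a uniformly random vertex $w_0$, and in every round the algorithm queries the edge joining the current survivor $w$ to a uniformly random neighbour $v$ of $w$; the adversary's reply becomes the survivor of the next round. Fixing $x,y$, I reuse the partition $X = N_i(x) \setminus N_{\le i}(y)$, $Y = N_i(y) \setminus N_{\le i}(x)$, $Z = V \setminus N_{\le i}(\{x,y\})$ and $R$, together with the distance facts (i)--(iv) recorded in the proof of Theorem~\ref{thm:upper_bound_pqn}; niceness gives $|X| \sim |Y| \sim d^i$, $|R| = o(d^i)$, $|Z| \sim n$, and $\deg(w) \sim d$ for all $w$ (the last from Lemma~\ref{lem:gnp exp}(a) with $i=1$). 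Write $\rho := d^i/n$.

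The heart of the argument is a short case analysis showing that, at the level of which part contains the survivor, the adversary is essentially powerless. If $w \in Z$ and $v \in X$ (or $v \in Y$), then replying with $w$ eliminates $x$ (resp.\ $y$), so to keep both targets alive the adversary is \emph{forced} to move the survivor into $X$ (resp.\ $Y$); if instead $v \in Z$ the two targets are equidistant and the survivor stays in $Z$. Once $w \in X$, querying a neighbour $v \in Y$ eliminates one of $x,y$ under either reply (this is the decisive distinguishing edge), while $v \in X$ or $v \in Z$ leaves the adversary able to keep the survivor in $X$ but never lets it return to $Z$; the situation from $Y$ is symmetric. Thus the adversary's free choices (available only when both targets are equidistant) never alter the macro-state, which is driven purely by the random draws. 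By parts~(f)--(g) every vertex of $X \cup Z$ (resp.\ $Y \cup Z$) has $(1+o(1))\,d^{i+1}/n$ neighbours in $Y$ (resp.\ $X$), so against a survivor in $Z$ each round leaves $Z$ with probability $(2+o(1))\rho$, and against a survivor in $X \cup Y$ each round produces a distinguishing edge with probability $(1+o(1))\rho$; these per-round probabilities are uniform over the relevant vertices, and since $bn/d^i = o(d)$ (because $d^{i+1}/n \to \infty$) repeatedly sampling a single vertex's neighbourhood causes no appreciable collisions. Part~(e) bounds the chance of ever stepping into $R$ during a phase by $\bigo(b/\om) = o(1)$, so $R$ may be ignored.

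It then remains to collect the estimate. A two-stage geometric computation (time to escape $Z$ at rate $2\rho$, followed by time to hit the opposite part at rate $\rho$, over $bn/d^i = b/\rho$ rounds, noting that a uniform start lies in $Z$ with probability $1-o(1)$ while the remaining $o(1)$ of starts contribute only to the error term) gives that a phase fails to eliminate either $x$ or $y$ with probability at most $2e^{-b} + o(1)$ --- exactly the per-phase bound obtained in Theorem~\ref{thm:upper_bound_pqn}. Since the phases use independent randomness, conditioning on both targets having survived the earlier phases multiplies these bounds, so $\Prob(\{x,y\} \subseteq V_k) \le (2e^{-b}+o(1))^{a \log n} \le n^{-2}$ by the identical calculation as before (using $a = 2/(b-1)$ and $\log 2 < 1$). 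The union bound over pairs and the probabilistic method finish the proof. I expect the main obstacle to be the case analysis of the preceding paragraph: one must check carefully that in every configuration the adversary is either forced toward a distinguishing edge or unable to move the survivor out of $X \cup Y$, so that the expansion estimates of Lemma~\ref{lem:gnp exp2}(e)--(g) apply to a genuinely memoryless, adversary-independent macro-dynamics; the probability computation itself is routine and parallels the pair-query proof.
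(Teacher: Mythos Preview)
Your proposal is correct and follows essentially the same approach as the paper: the paper also replaces the i.i.d.\ vertex sequence by the random-neighbour walk (keeping the survivor and querying a uniform neighbour), reuses the $X,Y,Z,R$ partition, invokes parts~(e)--(g) of Lemma~\ref{lem:gnp exp2} to control the step probabilities, and obtains the same per-phase failure bound $2e^{-b}+o(1)$ before finishing exactly as in Theorem~\ref{thm:upper_bound_pqn}. The one expository difference is that the paper keeps the ``good phase'' language of properties~(1)--(3) and argues that each transition probability matches its pair-query analogue, whereas you spell out the macro-state case analysis explicitly; your formulation actually makes it clearer why the adversary cannot push the survivor back from $X\cup Y$ into $Z$, a point the paper leaves implicit.
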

\begin{proof}
We will use the same notation as in the proof of Theorem~\ref{thm:upper_bound_pqn}. As before, we will have $a \log n$ independent phases, each phase lasting for $bn/d^i$ rounds. Since the first player (the algorithm) has to present edges, we need to modify the strategy slightly. During the first round of each phase, the algorithm presents a vertex selected uniformly at random from $V$. Then, in each consecutive round, the algorithm presents the vertex the adversary replied with in the previous round together with a random neighbour of that vertex. That is the only modification that is required. As before, we investigate any deterministic nice graph and independently consider each pair of vertices $x,y$ ($x \neq y$). The goal, as before, is to show that with probability at least $1-n^{-2}$, at the end of the last phase the target cannot be hidden in both $x$ and $y$. 

As before, a given phase is good if only vertices from $X \cup Y \cup Z$ are played during that phase, at least one vertex from $X$ and at least one vertex from $Y$ were played, and the first vertex is from $Z$. 
By Lemma~\ref{lem:gnp exp}(a), we may assume that each vertex has degree asymptotic to $d$. To show that a.a.s.\ at least one phase must be good, and so a.a.s.\ the strategy is a winning strategy, we will use Lemma~\ref{lem:gnp exp2}. Let us concentrate on any given phase. By property~(c), a.a.s.\ the first vertex is selected from $Z$. 
By property~(e), provided that the first endpoint of an edge is in $X \cup Y \cup Z$, the second endpoint is in $R$ with probability $o( d^{i+1} n^{-1} ) / d = o( d^{i} / n )$, which is of the same order as $|R|/n$. Since there are $\bigo (n/d^i)$ rounds in that phase, as in the previous proof, a.a.s.\ no vertex from $R$ is presented during that phase.
Similarly, by properties~(f)--(g), provided that the first endpoint of an edge is in $X \cup Z$ ($X \cup Z$, respectively), the second endpoint is in $Y$ ($X$, respectively) with probability asymptotic to $(d^{i+1} n^{-1})/d \sim d^i/n$, and so asymptotically the same as $|Y|/n$ ($|X|/n$, respectively).
Hence, conditioning on the fact that no vertex from $R$ is presented, the probability that no vertex from $Y$ ($X$, respectively) is presented is at most
$$
\left( 1 - (1+o(1)) \frac {d^i}{n} \right)^{bn/d^i + 1} \le e^{-b} + o(1).
$$
We get that a given phase fails with probability at most $2e^{-b} + o(1)$ and the argument continues as before. This finishes the proof of the theorem.
\end{proof}

Finally, let us deal with very dense random graphs, that is, when $p \in (0,1)$ is a fixed constant. The following observation follows immediately from the Chernoff's bound~\eqref{chern} (applied with $\eps = c \sqrt{\log n / n}$, where $c=c(p)$ is a sufficiently large constant) and the union bound. Since the reader is already warmed-up, we skip a simple proof and leave it as an exercise. 

\begin{lemma}\label{lem:gnp exp3}
Fix $p \in (0,1)$. 
The following properties hold a.a.s.\ for $G = (V,E) \in \Gnp$. 
\begin{itemize}
\item [(a)] For any vertex $x \in V$, $\deg(x) = |N_1(x)| = (1+\bigo( \sqrt{\log n / n} )) \, pn \sim pn$.
\item [(b)] For any vertices $x,y \in V$ ($x \neq y$), $|N_1(x) \setminus N_{\le 1}(y)| = (1+\bigo( \sqrt{\log n / n} )) \, p(1-p)n \sim p(1-p)n$.
\item [(c)] For any vertices $x,y,u \in V$ (pairwise distinct), 
$$
| N_1(u) \cap (N_1(y) \setminus N_{\le 1}(x)) | = (1+\bigo( \sqrt{\log n / n} )) \, p^2(1-p)n \sim p^2(1-p)n.
$$
\end{itemize}
\end{lemma}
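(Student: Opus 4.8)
The plan is to recognize each of the three quantities as a binomial random variable with mean of the claimed order, and then to apply Chernoff's bound~\eqref{chern} with $\eps = c\sqrt{\log n / n}$ for a sufficiently large constant $c = c(p)$, finishing with a union bound over the relevant vertices, pairs, or triples. The reason this works cleanly is that in each case the events attached to distinct ``counting'' vertices involve disjoint edge slots and are therefore independent, so the count is \emph{exactly} binomial and~\eqref{chern} applies directly. Note also that $\eps = c\sqrt{\log n/n} \to 0$, so in particular $0 < \eps < 3/2$ holds for all large $n$, as required by~\eqref{chern}.

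For part~(a) I would fix a vertex $x \in V$ and observe that $\deg(x) = |N_1(x)| \in \mathrm{Bin}(n-1, p)$, so $\E[\deg(x)] = (n-1)p = (1+\bigo(1/n))\,pn$. Since $1/n = o(\sqrt{\log n/n})$, the discrepancy between $(n-1)p$ and $pn$ is absorbed into the multiplicative error $\bigo(\sqrt{\log n/n})$, and the deviation $\eps (n-1)p = \bigo(\sqrt{\log n/n})\,pn$ delivered by~\eqref{chern} gives exactly the stated window. The per-vertex failure probability is at most $2\exp(-\eps^2 (n-1)p/3) = 2\exp(-(1+o(1))\,c^2 p \log n / 3)$.

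Parts~(b) and~(c) are identical once one checks that the counts are again binomial. For part~(b), with $x \neq y$ fixed, a vertex $w \in V \setminus \{x,y\}$ lies in $N_1(x) \setminus N_{\le 1}(y)$ precisely when the edge $wx$ is present and the edge $wy$ is absent; these two events are independent, and across distinct $w$ they use disjoint edge slots, so $|N_1(x) \setminus N_{\le 1}(y)| \in \mathrm{Bin}(n-2,\, p(1-p))$ with mean $(1+\bigo(1/n))\,p(1-p)n$. For part~(c), with $x,y,u$ pairwise distinct, a vertex $w \in V \setminus \{x,y,u\}$ contributes exactly when the edges $wu$ and $wy$ are present while $wx$ is absent, an event of probability $p^2(1-p)$ that is again independent across $w$, so the count is $\mathrm{Bin}(n-3,\, p^2(1-p))$. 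Applying~\eqref{chern} with the same $\eps$ yields per-instance failure probabilities $2\exp(-(1+o(1))\,c^2 p(1-p)\log n/3)$ and $2\exp(-(1+o(1))\,c^2 p^2(1-p)\log n/3)$, respectively.

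The only point needing care — and the closest thing to an obstacle, though it is mild — is the bookkeeping of the constant $c$ against the three union bounds, which run over $n$ vertices, over $\binom{n}{2} = \bigo(n^2)$ pairs, and over $\bigo(n^3)$ triples. Since $p \in (0,1)$ is a fixed constant, the smallest of the three exponent coefficients is $p^2(1-p)$, so choosing $c = c(p)$ large enough that $c^2 p^2(1-p) \ge 12$ forces every per-instance exponent to be at least $(1+o(1)) \cdot 4 \log n$; each individual failure probability is then $o(n^{-3})$, which dominates even the triple union bound. Summing, all three failure probabilities are $o(1)$, and the lemma follows a.a.s. Beyond verifying the exact-binomial structure, there is no genuine difficulty here.
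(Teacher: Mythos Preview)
Your proposal is correct and matches the paper's own approach exactly: the paper states only that the lemma ``follows immediately from the Chernoff's bound~\eqref{chern} (applied with $\eps = c \sqrt{\log n / n}$, where $c=c(p)$ is a sufficiently large constant) and the union bound'' and leaves the details as an exercise. You have supplied precisely those details, including the verification that each count is exactly binomial and the bookkeeping for $c$ against the three union bounds.
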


Now, we are ready to prove our last upper bound. 

\begin{theorem}\label{thm:dense_graphs}
Fix $p \in (0,1)$. 
The following property holds a.a.s.\ for $G = (V,E) \in \Gnp$:
$$
\pqn{G} \le \eqn{G} \le k := \frac {3}{p^2 (1-p)^2} \log n = \bigo \left( \log n \right).
$$
\end{theorem}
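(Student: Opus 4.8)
The plan is to give a randomized edge-query strategy for the algorithm that, for any fixed pair of distinct vertices $x,y$, eliminates at least one of them from the search space within $k$ rounds with probability at least $1 - n^{-2}$. Once this is established, a union bound over all $\binom{n}{2}$ pairs together with the probabilistic method yields a deterministic winning strategy of length $k$ (exactly as in the proofs of Theorems~\ref{thm:upper_bound_pqn} and~\ref{thm:upper_bound_eqn}). Since $p$ is a fixed constant, the diameter of $\Gnp$ is $2$ a.a.s., so the relevant distances take only the values $0,1,2$, and this is what makes the bound logarithmic rather than polynomial in $n$.

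First I would fix $x \neq y$ and, conditioning on the a.a.s.\ event that $G$ satisfies Lemma~\ref{lem:gnp exp3}, record the key structural facts: by part~(b) the set $A := N_1(y) \setminus N_{\le 1}(x)$ of vertices adjacent to $y$ but at distance $2$ from $x$ has size $\sim p(1-p)n$, and symmetrically for $B := N_1(x) \setminus N_{\le 1}(y)$. The crucial observation is that if the algorithm ever presents an edge $uv$ with $u \in A$ (so $d(u,y)=1 < 2 = d(u,x)$) and the adversary replies $u$, then $x$ is eliminated; and if the adversary instead replies $v$, then provided $v \in N_1(y)$ as well, we can keep pushing. I would set up a one-phase mechanism that, starting from a uniformly random vertex, repeatedly queries an edge incident to the current ``live'' vertex, so that the adversary is forced to keep the target inside a shrinking distinguishing region; the point of part~(c) is precisely that a uniformly random neighbour of a vertex $u$ lands in $A = N_1(y)\setminus N_{\le 1}(x)$ with probability $\sim p^2(1-p)$, a positive constant.

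The phase succeeds in distinguishing $x$ from $y$ unless a specific constant-probability bad event occurs at the query where the algorithm tries to hit $A$ (or its $x$-analogue). Concretely, each single attempt fails to force an elimination with some probability bounded away from $1$ by a constant depending only on $p$ through $p^2(1-p)^2$; running $a\log n$ independent phases (with the constant $a$ chosen so that $a \log n$ phases each of constant length multiply out to $k = \tfrac{3}{p^2(1-p)^2}\log n$ rounds) drives the overall failure probability down to $(1 - p^2(1-p)^2 + o(1))^{a\log n} \le n^{-2}$, which is the estimate~\eqref{eq:distinguish_pair} analogue. The arithmetic linking the per-phase success constant $p^2(1-p)^2$ to the exponent that produces $n^{-2}$ is what fixes the leading constant $3$ in $k$.

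The main obstacle I anticipate is bookkeeping the adversary's freedom: unlike the diameter-$(i+1)$ regime where $Z$ is almost all of $V$ and a long random walk of edges is needed, here distances are small and the ``good phase'' condition must be formulated so that a single lucky edge query into $A$ or $B$ suffices, while the adversary's every alternative reply still keeps the target inside a region whose intersection with $\{x,y\}$ the next query can split. Making the definition of a good phase precise enough that property~(c) of Lemma~\ref{lem:gnp exp3} delivers the constant success probability $p^2(1-p)^2$, and simultaneously ensuring the query really is an \emph{edge} query at every step (the second endpoint is chosen among neighbours of the live vertex), is the delicate part; the Chernoff concentration and union bound that conclude the argument are then routine and identical in spirit to the previous two theorems.
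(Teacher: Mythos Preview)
Your overall framework (randomized edge strategy, show each pair $x,y$ is separated with probability $\ge 1-n^{-2}$, union bound, probabilistic method) matches the paper. But you are overcomplicating the mechanics, and one of your distinguishing claims is actually false.

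The paper's strategy is much simpler than the phase/live-vertex walk you propose: in each of the $k$ rounds the algorithm independently samples a uniformly random vertex $u$ and then a uniformly random neighbour $v$ of $u$, and queries the edge $uv$. There are no phases and no carrying over of a live vertex. The single observation that drives the whole proof is that if $u\in N_1(x)\setminus N_{\le 1}(y)$ and $v\in N_1(y)\setminus N_{\le 1}(x)$ (or vice versa), then \emph{either} reply eliminates one of $x,y$: reply $u$ kills $y$ because $d(y,v)=1<2=d(y,u)$, and reply $v$ kills $x$ because $d(x,u)=1<2=d(x,v)$. Using Lemma~\ref{lem:gnp exp3}(a)--(c), this single-edge event has probability $\sim 2p^2(1-p)^2$; hence all $k$ rounds fail with probability at most $(1-(2+o(1))p^2(1-p)^2)^{k}\le n^{-2}$ for $k=\frac{3}{p^2(1-p)^2}\log n$.

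Your sentence ``if the algorithm ever presents an edge $uv$ with $u\in A$ \ldots and the adversary replies $u$, then $x$ is eliminated'' is not correct as stated: if $v$ also lies at distance $2$ from $x$ (for instance $v\in A$ as well), the reply $u$ is perfectly compatible with target $x$. You need both endpoints pinned down, one in $N_1(x)\setminus N_{\le 1}(y)$ and the other in $N_1(y)\setminus N_{\le 1}(x)$, and once you see that, the whole live-vertex bookkeeping becomes unnecessary. (Also, a random neighbour of $u$ lands in $A$ with probability $\sim p(1-p)$, not $p^2(1-p)$; the extra factor of $p$ in Lemma~\ref{lem:gnp exp3}(c) is absorbed when you divide by $\deg(u)\sim pn$.)
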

\begin{proof}
We continue using the same proof strategy as in the previous two theorems, Theorems~\ref{thm:upper_bound_pqn} and~\ref{thm:upper_bound_eqn}, but the strategy and the proof that it is a winning strategy a.a.s.\ are much easier than before. There are $a \log n$ rounds, where $a = 3 p^{-2} (1-p)^{-2}$. In each round, the first player (the algorithm) presents an edge that is generated at random, that is, the first vertex, $u$, is selected uniformly at random from $V$ and then a neighbour of $u$, vertex $v$, is selected uniformly at random from all neighbours of $u$. 
As in the previous two proofs, since we aim for a statement that holds a.a.s., we may assume that properties stated in Lemma~\ref{lem:gnp exp3} hold deterministically (that is, our graph is ``nice''). We need to show that for any pair of vertices $x,y$ ($x \neq y$), the above strategy guarantees that with probability at least $1-n^{-2}$ the target cannot be hidden in both vertices. 

Let us fix any pair of vertices $x,y$ ($x \neq y$). If $u \in N_1(x) \setminus N_{\le 1}(y)$ but $v \in N_1(y) \setminus N_{\le 1}(x)$ (or vice-versa), then the goal is achieved, regardless what the second player (the adversary) does. By Lemma~\ref{lem:gnp exp3}(b), $u \in N_1(x) \setminus N_{\le 1}(y)$ with probability asymptotic to $p(1-p)$. Conditioning on this event, by Lemma~\ref{lem:gnp exp3}(c), $u$ has $(1+o(1)) p^2 (1-p) n$ neighbours in $N_1(y) \setminus N_{\le 1}(x)$. Hence, by Lemma~\ref{lem:gnp exp3}(a), the (conditional) probability that $v \in N_1(y) \setminus N_{\le 1}(x)$ is asymptotic to $p (1-p)$. We get that $u \in N_1(x) \setminus N_{\le 1}(y)$ but $v \in N_1(y) \setminus N_{\le 1}(x)$ happens with probability asymptotic to $p^2(1-p)^2$. By symmetry, the same is true if $u$ and $v$ are swapped. It follows that the strategy fails with probability at most
$$
\left( 1 - (2+o(1)) p^2 (1-p)^2 \right)^{a \log n} \le \exp \left( - (2+o(1)) p^2 (1-p)^2 a \log n \right) \le n^{-2}, 
$$
since $a = 3 p^{-2} (1-p)^{-2}$. This finishes the proof of the theorem.
\end{proof}

\section{$\NP$-hardness}\label{sec:hardness}

In this section, we prove that computing the parameters $\pqn{G}$ and $\eqn{G}$ is $\NP$-hard for a general family of graphs, that is, for graphs of diameter at most $3$. This is done by a reduction from the $\SetCover$ problem: given a set of elements $A = \{a_1, \dots, a_n\}$, where $n = 3k$ for some natural number $k$, a family of sets $\mathcal{S} = \{S_1, \dots, S_m\},$ for $S_i \subseteq A$, $|S_i| = 3$ for $i \in [m]$ and $\bigcup_{i \in [m]} S_i = A$, the question is whether there exists a sub-family $\mathcal{SC} = \{S_{i_1}, \dots, S_{i_k}\}$ such that $\bigcup_{j \in [k]} S_{i_j} = A$. Note that, since $n = 3k$, if the answer to the above question is positive, then the sets in $\mathcal{SC}$ are pairwise disjoint.

For each instance of the problem, we construct a graph $G$ defined as follows. The vertex set of $G$ is defined as follows:
$$
V(G) = \{x, y\} \cup \left( \bigcup_{t \in [5]} A^t \right) \cup \left( \bigcup_{t \in [5]} \mathcal{S}^t \right) \cup L, 
$$
where $A^t, \mathcal{S}^t$, for $t \in [5]$, are copies of the sets $A$ and $\mathcal{S}$, respectively, and $|L| = 5n (5(m-k)-3) = n (25 (m-k) - 15)$.  The edge set of $G$ is defined as follows (see Figure~\ref{fig:reduct} for an illustration):
\begin{enumerate}
    \item $G\left[\{y\} \cup \bigcup_{t \in [5]} A^t \right]$ induces a star centred at $y$ (isomorphic to $K_{1, 5n}$), 
    \item each vertex $a \in \bigcup_{t \in [5]} A^t$ is adjacent to $5(m-k)-3$ leaves that belong to $L$,
    \item $G\left[\{x\} \cup \bigcup_{t \in [5]} \mathcal{S}^t \right]$ induces a star centered at $x$ (isomorphic to $K_{1, 5m}$),
    \item for $t \in [5], i \in [n], j \in [m]$, vertices $a^t_i \in A^t$ and $S^t_j \in \mathcal{S}^t$ are adjacent if and only if $a_i \in S_j$ in the corresponding instance of the problem. 
\end{enumerate}
For any $i \in [n]$ and $t \in [5]$, we define $L^t_i = \{a^t_i\} \cup \left(N_1 \left(a^t_i\right) \cap L\right)$, that is, the vertex set of a star centred at $a^t_i$ (isomorphic to $K_{1, 5(m-k)-3}$) defined by the second condition above. 

\begin{figure}[htb]
	\begin{center}
		\includegraphics[width = 0.8\textwidth]{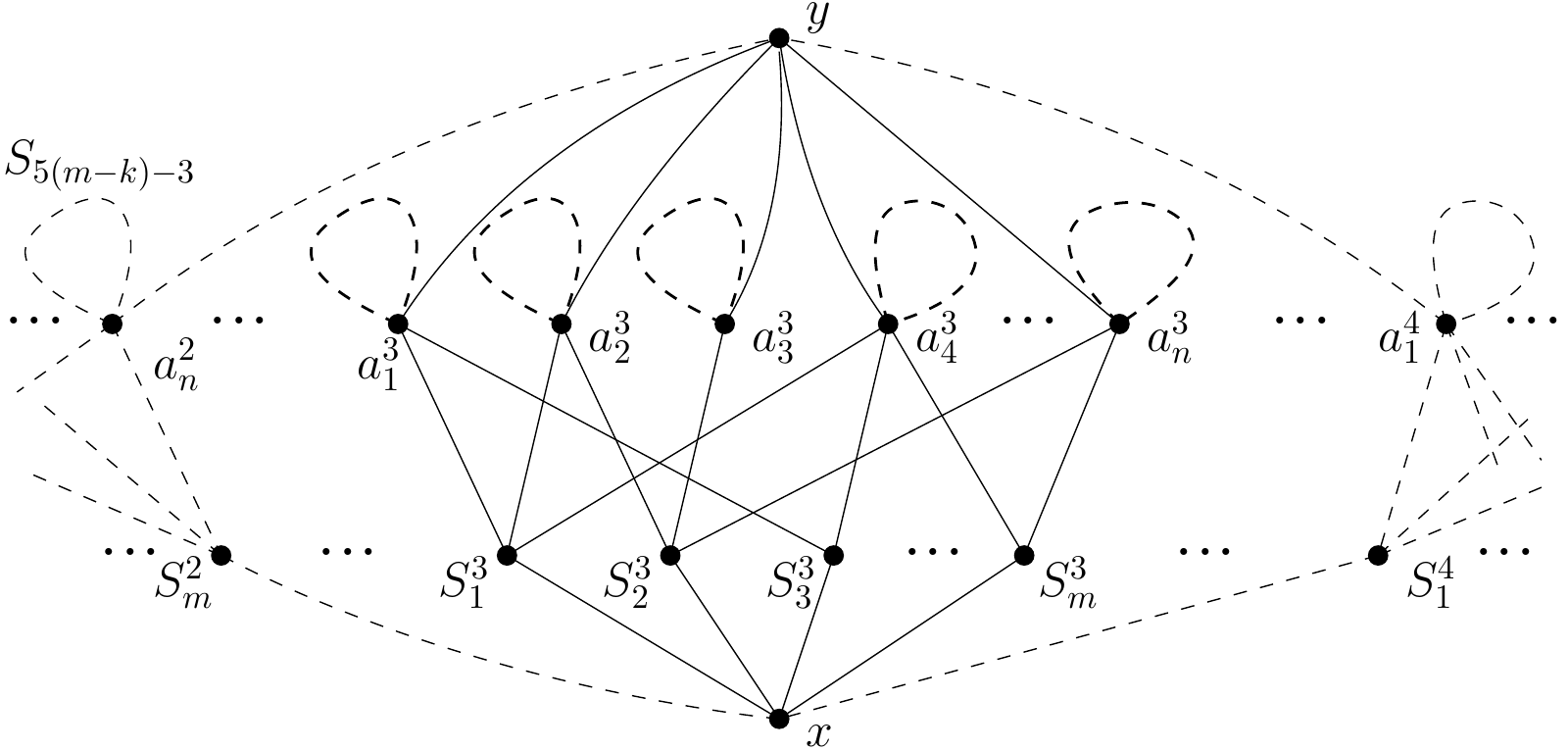}
	\end{center}
	\caption{A reduction from the $3$-EXACT SET COVER}
	\label{fig:reduct}
\end{figure}

\begin{lemma}\label{lem:NP}
Suppose that $n > m > k + 3 = n/3+3$ for some natural number $k$. For the graph $G$ defined above, one has $\pqn{G} \le \eqn{G} \le 5m$ 
if and only if there exists a set cover of size $k$ for the corresponding instance of the $\SetCover$ problem.
\end{lemma}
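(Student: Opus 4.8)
The plan is to read off the value of $\eqn{G}$ (equivalently $\pqn{G}$, using $\pqn{G}\le\eqn{G}$) from the rigid ``twin'' structure of the leaves: up to an additive constant, $\eqn{G}$ behaves like $5\kappa + 5(m-k)$, where $\kappa$ is the minimum size of a set cover. Since a size-$3$ cover of an $n=3k$ element ground set needs $\kappa\ge k$ sets, with equality exactly when an (automatically disjoint) cover of size $k$ exists, this heuristic already predicts the threshold: $\eqn{G}\le 5m$ if and only if a cover of size $k$ exists. The whole proof is an attempt to make this exact.

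The first step is the structural fact that drives everything: for a leaf $\ell\in L$ attached to a center $c=a^t_i$ one has $\dist{\ell}{w}=1+\dist{c}{w}$ for every $w\neq\ell$. Hence the $g:=5(m-k)-3$ leaves of a single star $L^t_i$ are mutual twins, and $\ell\in\Cl(u,v)$ iff $c\in\Cl(u,v)$ for any query $(u,v)$ naming neither $\ell$ nor a co-leaf; a query that does not name a leaf therefore treats a whole star's leaf-set atomically. In particular, reducing $g$ such twins to a single candidate forces at least $g-1$ ``naming'' queries, since against the adversary's majority reply each naming query removes only the single named leaf. I would also record two distance computations. A query on the edge $xS^t_j$ puts exactly the three stars $\{L^t_i:a_i\in S_j\}$ (together with the three element-vertices of $S_j$ and with $y$) strictly closer to $S^t_j$ and everything else closer to $x$; thus one such query is a single ``chunk'' operation clearing or localizing the three stars indexed by $S_j$ in copy $t$. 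By contrast a query on $ya^t_i$ handles only one star. So sets clear three stars per query and elements only one, which is precisely why an efficient clearing of a copy's $n$ stars corresponds to a partition of $A$ into $k$ triples, i.e.\ an exact cover.

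For the ``if'' direction I would give an explicit adaptive edge-query strategy from a cover $\mathcal{SC}=\{S_{j_1},\dots,S_{j_k}\}$. In each copy $t\in[5]$ the algorithm issues the chunk queries $xS^t_{j_1},\dots,xS^t_{j_k}$. The moment the adversary answers on the ``$S^t_{j_r}$-side'', the candidate set collapses to the three stars of $S_{j_r}$ in copy $t$, after which a bounded number of queries pin down the exact center and then the forced $g$ naming queries resolve it; if the adversary always answers on the ``$x$-side'', three stars of that copy are discarded each time, so after $5k$ queries the cover has exhausted $A$ and no leaf survives, leaving the residual configuration $\{x\}\cup\bigcup_t\mathcal{S}^t$, whose vertices are pairwise non-twins and can be separated within the remaining $5(m-k)$ queries. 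Balancing the localization cost $5k+\bigo(1)$ against the forced resolution cost $g=5(m-k)-3$, the total is $5m$, and I would use the slack hidden in the $-3$ (and the hypothesis $m>k+3$) to land exactly on $5m$; non-leaf targets are isolated strictly faster.

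For the ``only if'' direction I would design an adversary that, when no size-$k$ cover exists, survives $5m$ queries by protecting at least one full leaf-star in each copy: it answers a chunk query $xS^t_j$ on the ``$x$-side'' only while this does not touch its protected star, and it forces the eventual $g$ naming queries on a surviving star. The crucial accounting is that clearing all stars of a single copy requires the queried sets to cover $A$, hence at least $\kappa\ge k+1$ chunk-equivalent queries per copy; with five copies sharing only $x$ and $y$, the deficit is amplified to at least $5(\kappa-k)\ge 5$, which swamps the $\bigo(1)$ slack and pushes the total strictly past $5m$. The main obstacle is exactly this lower bound: I must show that no cleverer play -- naming leaves early, coupling copies through $x$ or $y$, or using element-vertices as separators -- can beat the chunk/cover rate, so that separating a copy's stars genuinely costs a cover's worth of queries, and then check that the constants ($5$ copies, leaf-multiplicity $5(m-k)-3$, and $m>k+3$) make the two bounds meet precisely at $5m$.
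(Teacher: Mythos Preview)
Your overall architecture matches the paper's: the ``if'' direction uses the cover $\mathcal{SC}$ to drive $5k$ chunk queries on the edges $xS^t_{j_r}$ and then finishes either on the residual $\mathcal{S}$-star or on one identified $L^t_i$; the ``only if'' direction has the adversary protect stars and argues that clearing a copy forces a cover. Two points are worth flagging.

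In the ``if'' branch, your residual set after all-$x$ replies should be $\{x\}\cup\bigl(\bigcup_t\mathcal{S}^t\setminus\{S^t_{j_r}\}\bigr)$, of size $5(m-k)+1$, not all of $\{x\}\cup\bigcup_t\mathcal{S}^t$; and those residual vertices \emph{are} twins (leaves of a star), so ``pairwise non-twins'' is wrong. These are cosmetic: the paper finishes exactly as you say, with $5(m-k)$ further edge queries, and in the other branch with $3$ queries on $S^t_{j_r}a^t_i$ to isolate a single $L^t_i$ followed by $5(m-k)-3$ leaf queries.

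The real gap is the one you name yourself. You write that you ``must show that no cleverer play \ldots\ can beat the chunk/cover rate'', and you do not do it. This is precisely the content of the paper's lower bound, and it is not a formality: a priori a pair query could straddle two copies, or use $y$, or an element-vertex, or a leaf, and one has to rule out that any such query eliminates more than one set's worth of stars. The paper handles this by an exhaustive eight-case analysis of the possible unordered positions of $\{u_i,v_i\}$ in $\{x\}\cup\{y\}\cup\bigl(\bigcup_t A^t\bigr)\cup\bigl(\bigcup_t\mathcal{S}^t\bigr)\cup L$, specifying for each case the adversary's reply and verifying that the eliminated vertices are contained in at most three stars $L^t_{\ell_1}\cup L^t_{\ell_2}\cup L^t_{\ell_3}$ lying in a \emph{single} copy $t$, and that the only way three stars are hit is when the query is equivalent to ``$x$ vs.\ $S^t_\ell$'' for some set $S_\ell$. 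Pigeonhole over the five copies then gives: after $5k+4$ queries some copy received at most $k$ queries, so if its $3k$ stars were all (even partially) touched, the at-most-$k$ sets used would cover $A$, contradicting the hypothesis. Hence a full star $L^t_j$ survives phase one; in phase two the adversary answers toward $a^t_j$, shedding one vertex per round, and after $5(m-k)-4$ further rounds at least two vertices of $L^t_j$ remain, so the game is not over at round $5m$. Your ``deficit $5(\kappa-k)\ge 5$'' is the right heuristic for why the five copies are there, but it is the case analysis that turns it into a proof.
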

\begin{proof}
($\Longleftarrow$) Suppose that the desired set cover $\mathcal{SC}= \{S_{i_1}, \dots, S_{i_k}\}$ exists. We introduce the following search strategy for the first player (the algorithm) implying that $\eqn{G} \le 5m$. Query the $5k$ edges of the form $x S^t_{i_j}$ for $t \in [5], j \in [k]$. If every single reply is $x$, then the target is on a star induced by the graph $G\left[\{x\} \cup \bigcup_{t \in [5]} \mathcal{S}^t  \setminus \bigcup_{t \in [5], j \in [k]} \left\{S^t_{i_j}\right\}\right]$ (isomorphic to $K_{1, 5(m-k)}$), and can be easily localized in the next $5(m-k)$ rounds. (Actually, it can be done faster by querying edges of the form $y a^t_i$ for some carefully selected values of $t \in [5]$ and $i \in [n]$.) The target is found in at most $5m$ rounds in total, yielding the desired upper bound for $\eqn{G}$. 
    
Suppose then that the adversary replies with vertex $S^t_{i_j}$ at some round of the initial $5k$ queries. Without loss of generality, we may assume that it was $S^1_1$ and that this vertex is adjacent to $a^1_1, a^1_2$, and $a^1_3$. It is easy to see that the target must be in $S^1_1$, $y$, or at a vertex of some star $L^1_i$ centered at $a^1_i$ ($i \in [3]$). To locate it, the algorithm queries the 3 edges adjacent to $S^1_1$, namely, edges $S^1_1 a^1_i$ ($i \in [3]$). If $S^1_1$ is the target, then $S^1_1$ is replied 3 times in a row. On the other hand, if $y$ is the target, then $S^1_1$ is not selected by the adversary even once. Suppose then that the target is located in some vertex of a star $L^1_i$ ($i \in [3]$). Clearly, the adversary replies with $a^1_i$ when this vertex is queried and, otherwise, $S^1_1$ is selected. The algorithm learns which star needs to be investigated and the target can be found in the remaining $5(m-k)-3$ queries.
This proves that $\eqn{G}\leq 5m$ and the claim follows immediately from Observation~\ref{obs:lower}.

\medskip

($\Longrightarrow$) We will prove the implication using proof by contraposition. Suppose that there is no set cover of size $k$ for the corresponding instance of the problem. Our goal is to show that $\pqn{G} \ge 5m+1$. To that end, we will provide a strategy for the second player (the adversary) that guarantees that the target is not found after $5m$ rounds of the game, that is, $|V_{5m}| \ge 2$. In fact, we will help the first player (the algorithm) slightly and announce at the beginning of the game that the target is hidden in the set 
$$
V_0 = L \cup \bigcup_{t \in [5]} A^t = \bigcup_{i \in [n], t \in [5]} L_i^t \subseteq V
$$ 
(instead of starting with $V_0 = V$). Note that $G[V_0]$ is a family of $5n$ independent stars, each isomorphic to $K_{1, 5(m-k)-3}$. Even with that additional information, the algorithm will not be able to find the target in $5m$ rounds.

We consider two phases of the game. The first phase lasts for $5k + 4$ rounds. We will show that at the end of this phase, there exists at least one star $L_i^t$ with the property that the target can be at any vertex of that star; that is, $L_i^t \subseteq V_{5k+4}$. Here is a strategy for the adversary. Suppose that for some $i \in [5k + 4]$ the algorithm queried a pair $u_i$ and $v_i$ at round $i$.
\begin{enumerate}
    \item $u_i = y$ and $v_i = a^t_j$ for some $j \in [n], t \in [5]$. The adversary replies with $y$. Then, $V_{i-1} \setminus V_i = V_{i-1} \setminus \Cl(y, a^t_j) \subseteq L^t_j$. At most one star is eliminated.
    \item $u_i = y$ and $v_i \not \in \bigcup_{t \in [5]} A^t$. The adversary replies with $y$. Then, only $v_i$ is eliminated, provided $v_i \in L$. At most one star is (partially) eliminated.
    \item $u_i = x$ and $v_i \in L^t_j$ for some $j \in [n], t \in [5]$. The adversary replies with $x$. Then, $V_{i-1} \setminus V_i = V_{i-1} \setminus \Cl(x, v_i) \subseteq L^t_j$. At most one star is eliminated.
    \item $u_i = x$ and $v_i = S^t_\ell$ for some $\ell \in [m], t \in [5]$. Let $S_\ell = \{a_{\ell_1}, a_{\ell_2}, a_{\ell_3}\}$. The adversary replies with $x$. Then, $V_{i-1} \setminus V_i = V_{i-1} \setminus \Cl(x, S^t_\ell) \subseteq L^t_{\ell_1} \cup L^t_{\ell_2} \cup L^t_{\ell_3}$. At most three stars are eliminated.
    \item $u_i = S^t_\ell$ and $v_i = S^s_j$ for some $\ell, j \in [m]$ and $t, s \in [5]$. Let $S_\ell = \{a_{\ell_1}, a_{\ell_2}, a_{\ell_3}\}$. The adversary replies with $v_i$. Then, $V_{i-1} \setminus V_i = V_{i-1} \setminus \Cl(v_i, S^t_\ell) \subseteq L^t_{\ell_1} \cup L^t_{\ell_2} \cup L^t_{\ell_3}$. At most 3 stars are eliminated. Note that this is equivalent to the previous case ($x$ vs.\ $S^t_\ell$).
    \item $u_i = S^t_\ell$ and $v_i \in L^s_j$ for some $\ell, j \in [m]$ and $t, s \in [5]$. Let $S_\ell = \{a_{\ell_1}, a_{\ell_2}, a_{\ell_3}\}$. The adversary replies with $v_i$. Then, $V_{i-1} \setminus V_i = V_{i-1} \setminus \Cl(v_i, S^t_\ell) \subseteq L^t_{\ell_1} \cup L^t_{\ell_2} \cup L^t_{\ell_3}$. At most 3 stars are eliminated. Again, note that this is equivalent to the previous case ($x$ vs.\ $S^t_\ell$), provided $t \neq s$ or $a_j \not \in S_\ell$. If $t = s$ and $a_j \in S_\ell$, then at most two stars are eliminated.
    \item $u_i = a^t_j$ and $v_i \in L^s_\ell$ for some $j, \ell \in [m]$ and $t, s \in [5]$. The adversary replies with $a^t_j$. Then $V_{i-1} \setminus V_i = V_{i-1} \setminus \Cl(a^t_j, v_i) \subseteq L^s_\ell$, that is, at most one star is eliminated. When $s = t$ and $j = \ell$ then only $v_i$ is eliminated. In any case, at most one star is (partially) eliminated.
    \item $\{u_i, v_i\} \in L$. Let $v_i \in L^t_j$ for some $j \in [n], t \in [5]$. The adversary replies with $u_i$. Then $V_{i-1} \setminus V_i = V_{i-1} \setminus \Cl(u_i, v_i) \subseteq L^t_j$. (In particular, if $u_i \in L^t_j$, then only $v_i$ is eliminated.) At most one star is (partially) eliminated. 
\end{enumerate}

Note that each query described above eliminates at most three stars, either entirely or partially. Moreover, all stars that are eliminated belong to one of the five copies of the sets $A$. More importantly, since there is no cover set of size $k$ for the corresponding instance of the problem, the algorithm cannot eliminate all $3k$ stars from one copy in $k$ queries (restricted to those that affect this copy). Hence, after $5k+4$ rounds, there is at least one star (say, $L^t_j$ centered at $a^t_j$) that may hide the target, that is, $L^t_j \subseteq V_{5k+4}$. 

The strategy for the second phase is very easy. In every round of this phase, the adversary replies with a vertex that is closer to $a^t_j$ (or any vertex if both vertices that are queried are at the same distance from $a^t_j$). Clearly, at most one vertex of $L^t_j$ is eliminated in each round. Hence, since the star consists of $5(m-k)-2$ vertices, after $5(m-k)-4$ additional rounds, at least two vertices are still left in $V_{5m}$ and so the game is not over yet. This finishes the proof that $\pqn{G} \ge 5m+1$, and we are done.
\end{proof}

Since both our search problems are in $\NP$ (the strategy length is trivially bounded from above by the order of the graph; see also Observation~\ref{obs:lower}), Lemma~\ref{lem:NP} implies immediately Theorem~\ref{thm:NP}.

\bibliography{ref}

\end{document}